\newtheorem{theorem}{Theorem}[section]
\newtheorem{lemma}[theorem]{Lemma}
\newtheorem{proposition}[theorem]{Proposition}
\newtheorem{corollary}[theorem]{Corollary}
\theoremstyle{definition}
\theoremstyle{remark}
\numberwithin{equation}{section}
\def\fnote#1{\footnote}
\def\real{{\mathbb R}}
\def\ignora#1{}
\def\n3#1{\left\vert  \! \left\vert \! \left\vert \, #1 \, \right\vert \!
  \right\vert \! \right\vert }
\begin{document}

\title{ Octahedral norms in spaces of operators}

\author{Julio Becerra Guerrero, Gin{\'e}s L{\'o}pez-P{\'e}rez and Abraham Rueda Zoca}
\address{Universidad de Granada, Facultad de Ciencias.
Departamento de An\'{a}lisis Matem\'{a}tico, 18071-Granada
(Spain)} \email{juliobg@ugr.es, glopezp@ugr.es,
arz0001@correo.ugr.es}

\thanks{Partially supported by MEC (Spain) Grant MTM2006-04837 and Junta de Andaluc\'{\i}a Grants FQM-185}
\subjclass{46B20, 46B22. Key words:
  octahedral norms, spaces of operators, projective tensor product}

\maketitle \markboth{J. Becerra, G. L\'{o}pez and A. Rueda
}{Octahedral norms in spaces of operators }

\bigskip

\begin{abstract} We study  octahedral norms in the space of bounded linear operators between Banach
spaces. In fact, we prove that $L(X,Y)$ has octahedral norm
whenever $X^*$ and $Y$ have octahedral norm. As a consequence the
space of operators from $L(\ell_1 ,X)$ has octahedral norm if, and
only if, $X$ has octahedral norm. These results also  allows us to
get the stability of strong diameter 2 property for projective
tensor products of Banach spaces, which is an improvement of   the
known results about the size of nonempty relatively weakly open
subsets in the unit ball of the projective tensor product of
Banach spaces.

\end{abstract}

\section{Introduction.}

Octahedral norms were introduced by G. Godefroy in \cite{G} and
they have been used in order to characterize when a Banach space
contains an isomorphic copy of $\ell_1$. In fact, G. Godefroy
shows in \cite{G} that a Banach space $X$ contains an isomorphic
copy of $\ell_1$ if, and only if, $X$ can be equivalently renormed
so that the new norm in $X$ is octahedral. Also, it is proved in
\cite{BeLoRu} that the norm of a Banach space $X$ is octahedral
if, and only if, every convex combination of $w^*$-slices in the
unit ball of $X^*$ has diameter $2$. As a consequence, the norm of
$X^*$ is octahedral if, and only if, every convex combination of
slices in the unit ball of $X$ has diameter $2$.

In the last years intensive efforts have been done in order to
discover new families of Banach spaces satisfying that every slice
or every nonempty relatively weakly open subset in  its  ball has
diameter 2, (see \cite{ALN}, \cite{BeLo}, \cite{NyWe}, \cite{Sh}).
It is known  that every nonempty relatively weakly open subset in
the unit ball of a Banach space contains a convex combination of
slices. Then having every convex combination of slices with
diameter 2 implies having every slice or every nonempty relatively
weakly open subsets with diameter two. We say that a Banach space
has the strong diameter two property (SD2P) if every convex
combination of slices in its unit ball has diameter 2.

Let us observe that having SD2P implies failing in an extreme way
the well known Radon-Nikodym property, since this property is
characterized in terms of the existence of slices with diameter
arbitrarily small. Also it is known that the projective tensor
product of Banach spaces with the RNP is not necessarily a RNP
space \cite{BP}. However, the unit ball of a such product has
slices with diameter arbitrarily small.  It is proved in
\cite{ABR} that every nonempty relatively weakly open subset in
the unit ball of
 the projective
 tensor product of two Banach spaces with infinite-dimensional
 centralizer has diameter $2$, a property weaker that SD2P. Then it is a natural open question if the
 SD2P is stable for projective tensor products \cite{ALN}.

 The aim of this note is to study the existence of
 octahedral norms in spaces of operators. In this setting we prove that the space of operators
 $L(X,Y)$ has octahedral norm whenever $X$ and $Y^*$ have it. Also we get some necessary conditions in order to
 $L(X,Y)$ has octahedral norm. Now, taking into account the dual
 relation between having octahedral norm and satisfying SD2P we
 show, as a consequence,
 that the SD2P is stable for projective tensor products of Banach spaces, which
 improves the stability  known results in \cite{ABR}. Also, we get
 necessary
 conditions on Banach spaces in order that the projective tensor
 product of these spaces has the SD2P.

 We pass now to introduce some notation. We consider real Banach
 spaces. $B_X$ and $S_X$ stand for the closed unit ball and the
 unit sphere of the Banach space $X$. $X^*$ is the topological
 dual space of $X$. A slice in the unit ball of $X$ is the set
 $$S(B_X , x^* ,\alpha):= \{x\in B_X:\ x^*(x)>1-\alpha\},$$ where $x^*\in S_{X^*}$ and
 $0<\alpha <1$.

 The norm of a Banach space $X$ is said to be {\it octahedral} if,
 for every finite-dimensional subspace $E$ of $X$ and for every
 $\varepsilon>0$ there is $y\in S_X$ such that $$\Vert x+\lambda
 y\Vert \geq (1-\varepsilon)(\Vert x\Vert +\vert \lambda\vert )$$
 for every $x\in E $ and scalar $\lambda$.

 Let $X,Y$ be Banach spaces. According to \cite{DeFlo},we recall that
the {\it{projective tensor product of $X$ and $Y$}}, denoted by
$X\widehat{\otimes}_\pi Y$, is the completion of $X\otimes Y$
under the norm given by

$$\Vert u\Vert:=\inf \left \{\sum_{i=1}^n \Vert x_i\Vert\Vert y_i\Vert\ /\ n\in\mathbb N, x_i\in X, y_i\in Y\ \forall i\in\{1,\ldots, n\}, u=\sum_{i=1}^n x_i\otimes y_i \right\}.$$

We recall that the space $\mathcal B(X\times Y)$ of bounded
bilinear forms defined on $X\times Y$ is linearly isometric to the
topological dual of $X\widehat{\otimes}_\pi Y$.

Let $X$ be a Banach space and $L(X)$ the space of all bounded and
linear operators on $X$. By a {\it multiplier} on $X$ we mean an
element $T \in L(X)$ such that every extreme point of $B_{X^*}$
becomes an eigenvector for $T^t$. Thus, given a multiplier $T$ on
$X$, and an extreme point $p$ of $B_{X^*}$, there exists a unique
scalar  $a_T(p)$ satisfying $ T^t (p) =a_T(p)p$. The  {\it
centralizer} of $X$ (denoted by $Z(X)$) is defined as the set of
those multipliers $T$ on $X$ such that there exists a multiplier
$S$ on $X$ satisfying $a_S(p)=\overline{a_T(p)}$ for every extreme
point $p$ of $B_{X^*}$. Thus, if $X$ a real Banach space, then
$Z(X)$ coincides with the set of all multipliers on $X$. In any
case, $Z(X)$ is a closed subalgebra of $ L (X)$ isometrically
isomorphic to $\mathcal C (K_X)$, for some
compact Hausdorff topological space $K_X$ (see \cite[Proposition
3.10]{Beh}).

Given a Banach space $X$, we consider the increasing sequence of
its even duals
$$
X\subseteq X^{**}\subseteq X^{(4}\subseteq \cdot \cdot \cdot
\subseteq X^{(2n} \subseteq \cdot \cdot \cdot .
$$
Since every Banach space is isometrically embedded into its second
dual, we can define $X^{(\infty }$ as the completion of the normed
space $\bigcup _{n=0}^{\infty }X^{(2n}$.

For a Banach space $X$,  an {\it $L$-projection} on $X$ is a
(linear) projection $P: X \longrightarrow X$ satisfying $ \Vert x
\Vert = \Vert  P(x) \Vert + \Vert x- P(x) \Vert$  for every $x \in
X$.  In such a case, we will say that the subspace $P(X)$ is an
{\it $L$-summand} of $X$. Let us notice that the composition of
two $L$-projections on $X$ is an $L$-projection
\cite[Proposition~1.7]{Beh}, so the closed linear subspace of $L
(X)$ generated by all $L$-projections on $X$ is a subalgebra of $
L (X)$, the space of all bounded and linear operators on $X$. This
algebra, denoted by $C(X)$, is called the {\it Cunningham algebra}
of $X$. It is known that $C(X)$ is linearly isometric  to $Z(X^*)$
(\cite[Theorems 5.7 and 5.9]{Beh}). For instance, any
infinite-dimensional space $L_1 (\mu)$ satisfies that its
Cunningham algebra is infinite-dimensional. By \cite[Lemma
VI.1.1]{HWW}, the centralizer of $L(X,Y)$, the space of all
bounded and linear operators from $X$ to $Y$, is infinite
dimensional whenever  either $C(X)$ is infinite dimensional or
$Z(Y)$ is infinite dimensional.

\section{Main results.}

Our start point is the next proposition, which will be used in
order to deal with spaces satisfying the strong diameter two
property.

\begin{proposition}\label{prstrong}

Let $X$ be a Banach space and $C:=\sum_{i=1}^n \lambda_i S_i$ a
convex combination of slices of $B_X$ such that

$$diam(C)=2.$$

Then for every $\varepsilon>0$ there exist $x_i, y_i\in S_i\ \
\forall i\in\{1,\ldots, n\}$ and $f\in S_{X^*}$ such that

$$f(x_i-y_i)>2-\varepsilon\ \ \forall i\in \{1,\ldots, n\}.$$
Thus $$f(x_i),\ f(-y_i)>1-\varepsilon \ \forall i\in \{1,\ldots,
n\}.$$

\end{proposition}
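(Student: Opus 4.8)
The plan is to exploit the definition of diameter together with the convex-combination structure of $C$ by means of an elementary averaging argument. Since $C\subseteq B_X$, every distance between two points of $C$ is at most $2$, so the hypothesis $\mathrm{diam}(C)=2$ merely says that for each $\delta>0$ there are $u,v\in C$ with $\Vert u-v\Vert>2-\delta$. By the very definition of $C=\sum_{i=1}^n\lambda_i S_i$ we may write $u=\sum_{i=1}^n\lambda_i a_i$ and $v=\sum_{i=1}^n\lambda_i b_i$ with $a_i,b_i\in S_i$, and by Hahn--Banach we may pick $f\in S_{X^*}$ with $f(u-v)>2-\delta$. This gives $\sum_{i=1}^n\lambda_i\, f(a_i-b_i)>2-\delta$.

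The key step is to push this global estimate down to each index. For every $j$ one has $f(a_j-b_j)\le\Vert a_j-b_j\Vert\le 2$, and since $\sum_i\lambda_i=1$,
$$\sum_{i=1}^n\lambda_i\,f(a_i-b_i)\le \lambda_j\,f(a_j-b_j)+2(1-\lambda_j)=2+\lambda_j\bigl(f(a_j-b_j)-2\bigr).$$
Combining with the lower bound $2-\delta$ yields $f(a_j-b_j)>2-\delta/\lambda_j$ for every $j$. Now, given $\varepsilon>0$, run the argument with $\delta:=\tfrac12\,\varepsilon\min_{1\le i\le n}\lambda_i$ (the minimum is a positive number, as there are finitely many strictly positive weights), and set $x_i:=a_i$, $y_i:=b_i$ together with the functional $f$ just produced; then $f(x_i-y_i)>2-\varepsilon$ for all $i$. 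Finally, from $f(x_i)-f(y_i)>2-\varepsilon$ and $f(x_i)\le 1$, $-f(y_i)\le 1$ we read off $f(x_i)>1-\varepsilon$ and $f(-y_i)>1-\varepsilon$, which is the last assertion.

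I do not expect a genuine obstacle here; the only point demanding a little care is the bookkeeping of the $\varepsilon$--$\delta$ dependence, namely that $\delta$ must be chosen small relative to $\min_i\lambda_i$ and not merely relative to $\varepsilon$, so that the individual estimates $f(a_i-b_i)>2-\delta/\lambda_i$ all survive the passage to $2-\varepsilon$.
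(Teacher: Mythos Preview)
Your proof is correct and essentially identical to the paper's: both pick $u,v\in C$ with $\Vert u-v\Vert>2-\delta$, apply Hahn--Banach to get $f\in S_{X^*}$, and then use the elementary averaging bound $\sum_i\lambda_i f(a_i-b_i)\le \lambda_j f(a_j-b_j)+2(1-\lambda_j)$ to force $f(a_j-b_j)>2-\delta/\lambda_j$, with $\delta$ chosen small relative to $\min_i\lambda_i$. The paper phrases the key step as a one-line proof by contradiction rather than your direct inequality, and it likewise assumes without loss of generality that all $\lambda_i>0$; these are purely cosmetic differences.
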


\begin{proof}

Fix an arbitrary $\varepsilon>0$. Choose $\delta>0$ such that
$\delta<m\varepsilon$, for $m:=\min\limits_{1\leq i\leq
n}\lambda_i$ (notice that we can assume that $\lambda_i\neq 0\
\forall i\in \{1,\ldots, n\}$).

Since $diam(C)=2$ then for every $1\leq i\leq n$ there exist
$x_i,\ y_i\in S_i$  such that
$$\left \Vert \sum_{i=1}^n \lambda_i x_i-\sum_{i=1}^n \lambda_i
y_i\right \Vert >2-\delta .$$ Hence there exists $f\in S_{X^*}$
satisfying

$$f\left (\sum_{i=1}^n \lambda_i x_i-\sum_{i=1}^n \lambda_i y_i \right )=\sum_{i=1}^n \lambda_i f(x_i-y_i)>2-\delta.$$

As a consequence we have

\begin{equation}\label{puntoslej}
f(x_i-y_i)>2-\varepsilon\ \ \forall i\in \{1,\ldots, n\}.
\end{equation}

Indeed, assume that there exists $i\in \{1,\ldots,n\}$ such that
$f(x_i-y_i)\leq 2-\varepsilon$ then

$$2-\delta<\sum_{j=1}^n \lambda_j f(x_j-y_j)=\lambda_i f(x_i-y_i)+\sum_{j\neq i}
\lambda_i f(x_j-y_j)\leq$$
$$\lambda_i(2-\varepsilon)+2(1-\lambda_i)=
2- \lambda_i\varepsilon<2-\delta,$$

a contradiction. So (\ref{puntoslej}) holds.

\end{proof}

We omit the proof of next proposition which is the dual version of
the above one.

\begin{proposition}\label{prstrong-dual}
Let $X$ be a Banach space and $C:=\sum_{i=1}^n \lambda_i S_i$ a
convex combination of $w^*$-slices of $B_{X^*}$ such that
$$diam(C)=2.$$
Then for every $\varepsilon>0$ there exist $f_i, g_i\in S_i\ \
\forall i\in\{1,\ldots, n\}$ and $x\in S_{X}$ such that

$$(f_i-g_i)(x)>2-\varepsilon\ \ \forall i\in \{1,\ldots, n\}.$$
Thus $$f_i(x),\ g_i(-x)>1-\varepsilon\ \forall i\in \{1,\ldots,
n\}.$$

\end{proposition}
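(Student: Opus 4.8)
The plan is to transcribe the proof of Proposition~\ref{prstrong} word for word, replacing the single Hahn--Banach separation step by the elementary fact that the norm of a functional on $X$ is the supremum of its absolute values over $B_X$. So first I would fix $\varepsilon>0$ and, exactly as before, choose $\delta>0$ with $\delta<m\varepsilon$ for $m:=\min_{1\le i\le n}\lambda_i$, assuming (as we may) that every $\lambda_i\neq 0$; it costs nothing to also require $\delta<1$. Since $diam(C)=2$, for each $i$ there exist $f_i,g_i\in S_i$ with $\left\Vert\sum_{i=1}^n\lambda_i f_i-\sum_{i=1}^n\lambda_i g_i\right\Vert>2-\delta$.

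The key step is the following. The element $\varphi:=\sum_{i=1}^n\lambda_i(f_i-g_i)$ belongs to $X^*$ and satisfies $\Vert\varphi\Vert>2-\delta$, so by the very definition of the dual norm there is $x\in B_X$ with $\varphi(x)>2-\delta>0$; normalizing $x$ (which only increases $\varphi(x)$, as $\varphi(x)>0$) we may assume $x\in S_X$. Thus $\sum_{i=1}^n\lambda_i(f_i-g_i)(x)>2-\delta$, which is the precise analogue of the inequality obtained from $f$ in the proof of Proposition~\ref{prstrong}: the functional has been replaced by a point of $S_X$ and the points of $S_i$ by functionals. Note that this is exactly where the duality is used: for $\varphi\in X^*$ one has $\Vert\varphi\Vert=\sup\{\varphi(x):x\in B_X\}$, so the norming element can be chosen in $X$ itself rather than in $X^{**}$.

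From here the argument is identical to the primal one. If $(f_j-g_j)(x)\le 2-\varepsilon$ for some $j$, then, using $(f_i-g_i)(x)\le 2$ for all $i$ and $\sum_{i\neq j}\lambda_i=1-\lambda_j$,
\[
2-\delta<\sum_{i=1}^n\lambda_i(f_i-g_i)(x)\le\lambda_j(2-\varepsilon)+2(1-\lambda_j)=2-\lambda_j\varepsilon<2-\delta,
\]
a contradiction; hence $(f_i-g_i)(x)>2-\varepsilon$ for every $i$. Finally, since $\Vert f_i\Vert,\Vert g_i\Vert\le 1$ and $\Vert x\Vert=1$ we have $f_i(x)\le 1$ and $g_i(x)\ge-1$, so $f_i(x)=f_i(x)-g_i(x)+g_i(x)>2-\varepsilon-1=1-\varepsilon$ and likewise $g_i(-x)>1-\varepsilon$.

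I do not expect a genuine obstacle: the only point requiring attention is keeping track of the direction of the duality, i.e.\ that the supremum defining $\Vert\varphi\Vert$ for $\varphi\in X^*$ ranges over $B_X$, which is what lets us stay inside $X$. All the quantitative bookkeeping — the choice of $\delta$, the averaging estimate, and the passage to the one-sided inequalities $f_i(x)>1-\varepsilon$ and $g_i(-x)>1-\varepsilon$ — is a verbatim copy of the corresponding steps in Proposition~\ref{prstrong}.
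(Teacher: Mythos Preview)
Your proposal is correct and is precisely the dual version the paper has in mind (the paper in fact omits the proof, merely stating that it parallels Proposition~\ref{prstrong}). The only place where the duality matters---replacing the Hahn--Banach functional $f\in S_{X^*}$ by a norming point $x\in S_X$, which works because the dual norm is computed over $B_X$---is exactly the point you single out, and the rest of the argument carries over verbatim.
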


Our last preliminary lemma allows us writing the unit ball of the
dual of a space of operators in terms of elements in a tensor
product of spheres.

\begin{lemma}\label{densidad}
Let $X,Y$ be Banach spaces. We consider $H$ a closed subspace of
$L(X,Y)$ such that $X^*\otimes Y\subseteq H$. Then we have that
$B_{H^*}=\overline{co}^{w^*}(S_X\otimes S_{Y^*})$.
\end{lemma}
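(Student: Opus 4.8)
The plan is to prove the two inclusions separately. The inclusion $\overline{co}^{w^*}(S_X \otimes S_{Y^*}) \subseteq B_{H^*}$ is the easy direction: each elementary tensor $x \otimes y^*$ with $x \in S_X$, $y^* \in S_{Y^*}$ acts on $H$ by $T \mapsto y^*(Tx)$, which has norm at most $\|x\|\,\|y^*\| = 1$, so $S_X \otimes S_{Y^*} \subseteq B_{H^*}$; since $B_{H^*}$ is convex and $w^*$-closed (Banach--Alaoglu), it contains the $w^*$-closed convex hull. The real content is the reverse inclusion, and here I would argue by contradiction using a separation theorem.

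For the reverse inclusion, suppose some $\varphi \in B_{H^*}$ is not in the $w^*$-closed convex set $K := \overline{co}^{w^*}(S_X \otimes S_{Y^*})$. Since $K$ is $w^*$-closed and convex in $H^*$, the Hahn--Banach separation theorem applied in the dual pair $(H^*, H)$ gives an element $T \in H$ and a real number $\alpha$ with
$$\varphi(T) > \alpha \geq \psi(T) \quad \text{for all } \psi \in K.$$
In particular $\sup_{x \in S_X,\, y^* \in S_{Y^*}} y^*(Tx) \leq \alpha$. But the left-hand side is exactly $\sup_{x \in S_X} \|Tx\| = \|T\|$ (using that $\|Tx\| = \sup_{y^* \in S_{Y^*}} y^*(Tx)$ by Hahn--Banach in $Y$). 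Hence $\|T\| \leq \alpha < \varphi(T) \leq \|\varphi\|\,\|T\| \leq \|T\|$, a contradiction. This forces $B_{H^*} \subseteq K$.

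I do not expect a serious obstacle here; the statement is essentially a standard fact that the unit ball of the dual of any space $H$ sitting between $X^* \otimes Y$ and $L(X,Y)$ is the $w^*$-closed convex hull of the rank-one norming functionals, and it follows cleanly from bipolar/separation arguments. The only point requiring a little care is the identity $\|T\| = \sup\{y^*(Tx) : x \in S_X,\ y^* \in S_{Y^*}\}$, which is where the hypothesis $X^* \otimes Y \subseteq H$ is implicitly irrelevant to the computation of $\|T\|$ (that norm is the operator norm inherited from $L(X,Y)$) — one just needs that the functionals $x \otimes y^*$ indeed belong to $H^*$, i.e. restrict to bounded functionals on $H$, which is immediate since $H \subseteq L(X,Y)$. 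I would present the separation argument as the core of the proof and relegate the norm identity to a one-line remark.
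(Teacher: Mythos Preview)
Your proof is correct and follows essentially the same route as the paper: both identify $S_X\otimes S_{Y^*}$ as a norming subset of $B_{H^*}$ (via $\Vert T\Vert=\sup\{y^*(Tx):x\in S_X,\ y^*\in S_{Y^*}\}$) and then invoke a separation/bipolar argument, with your write-up simply making that argument explicit. Your side remark that the hypothesis $X^*\otimes Y\subseteq H$ is not actually needed is also correct---the paper invokes it only to get $\Vert x\otimes y^*\Vert=1$, but neither inclusion requires more than $\Vert x\otimes y^*\Vert\leq 1$ together with the norming identity, both of which hold for any closed subspace $H\subseteq L(X,Y)$.
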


\begin{proof}
Given $T\in H$, it is clear that $$ \Vert T\Vert =\sup
\{y^*(T(x)):x\in S_X, \ y^*\in S_{Y^*}\}.$$ This  implies that the
set of continuous linear functional $x\otimes y^* \in H^*$ given
by $(x\otimes y^*)(T):=y^*(T(x))$, for $x\in S_X$ and $ y^*\in
S_{Y^*}$, is a norming subset of $H^*$. Since $X^*\otimes
Y\subseteq H$, we have that $\Vert x\otimes y^*\Vert =1$ for every
$x\in S_X$ and $ y^*\in S_{Y^*}$. By a separation argument, we get
that $B_{H^*}=\overline{co}^{w^*}(S_X\otimes S_{Y^*})$.
\end{proof}

Note that as an easy consequence of the above lemma we get that
every $w^*-$slice of the unit ball in $L(X,Y)^*$ has diameter $2$,
whenever every slice of the unit ball in $X$ has diameter $2$.

Let $X$ be a Banach space, and let $u$ be a norm-one element in
$X$. We put $$D(X,u):=\{f\in B_{X^*}:f(u)=1\}.$$ Now, assume that
$X$ has a (complete) predual $X_*$, and put
$$D^{w^*}(X,u):=D(X,u)\cap X_*.$$ If $D^{w^*}(X,u)=\emptyset $, then
we define $n^{w^*}(X,u):=0$. Otherwise, we define $n^{w^*}(X,u)$
as the largest non-negative real number $k$ satisfying $$k\Vert
x\Vert \leq v^{w^*}(x):=\sup \{\vert f(x)\vert :f\in
D^{w^*}(X,u)\}$$ for every $x\in X$. We say that $u$ is a
$w^*$-unitary element of $X$ if the linear hull of $D^{w^*}(X,u)$
equals the whole space $X^*$.

Now, let $X$ be an arbitrary Banach space, and let $u$ be a
norm-one element in~$X$. We define $n(X,u)$ as the largest
non-negative real number $k$ satisfying $$k\Vert x\Vert \leq
v(x):=\sup \{\vert f(x)\vert :f\in D(X,u)\}$$ for every $x\in X$,
and we say that $u$ is a unitary element of $X$ if the linear hull
of $D(X,u)$ equals the whole space $X^*$. Noticing that
$D(X,u)=D^{w^*}(X^{***},u)$, it is clear that $u$ is unitary in
$X$ if and only if it is $w^*$-unitary in $X^{**}$, and that
$n(X,u)=n^{w^*}(X^{**},u)$.

The next result is a sufficient condition in order to have
octahedral norm in a space of operators.

\begin{theorem}\label{teostrong}
Let $X,Y$ be Banach spaces. Assume that $Y$ has octahedral norm
and that there exists $f\in S_{X^{*}}$ such that $n(X^{*},f)=1$.
Let be $H$ a closed subspace of $L(X,Y)$ such that $X^*\otimes
Y\subseteq H$. Then $H$ has octahedral norm.
\end{theorem}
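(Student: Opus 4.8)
The plan is to argue directly from the definition of an octahedral norm, rather than via the dual reformulation. Recall that for every $S\in L(X,Y)$ one has $\Vert S\Vert=\sup\{y^*(Sx):x\in S_X,\ y^*\in S_{Y^*}\}$, and that for any $y_0\in S_Y$ the rank-one operator $f\otimes y_0\colon x\mapsto f(x)y_0$ belongs to $X^*\otimes Y\subseteq H$ and has norm $\Vert f\Vert\,\Vert y_0\Vert=1$. Fix a finite-dimensional subspace $E\subseteq H$ and $\varepsilon>0$. Replacing $S_E$ by a finite $\varepsilon$-net $\{S_1,\dots,S_m\}\subseteq S_E$ (the general $x\in E$ and scalar $\lambda$ then follow by homogeneity, the net error being absorbed into $\varepsilon$), it suffices to find $y_0\in S_Y$ so that $T:=f\otimes y_0$ satisfies $\Vert S_j+\lambda T\Vert\geq(1-\varepsilon)(1+\vert\lambda\vert)$ for all $j$ and all scalars $\lambda$. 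The crucial point is that, to bound $\Vert S_j+\lambda T\Vert$ from below, it is enough to evaluate on a single vector of $S_X$ that may depend on $j$.

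First I would produce, for each $j$, a good test vector $x_j\in S_X$. Since $\Vert S_j\Vert=\Vert S_j^*\Vert=1$, choose $y_j^*\in S_{Y^*}$ with $\Vert S_j^*y_j^*\Vert$ close to $1$ and set $x_j^*:=S_j^*y_j^*/\Vert S_j^*y_j^*\Vert\in S_{X^*}$. The hypothesis $n(X^*,f)=1$ says precisely that $\sup\{\vert\Phi(x^*)\vert:\Phi\in B_{X^{**}},\ \Phi(f)=1\}=\Vert x^*\Vert$ for every $x^*\in X^*$; applied to $x_j^*$ this gives $F_j\in B_{X^{**}}$ with $F_j(f)=1$ and $\vert F_j(x_j^*)\vert$ close to $1$, hence, after possibly replacing $y_j^*$ by $-y_j^*$, with $F_j(S_j^*y_j^*)=(S_j^{**}F_j)(y_j^*)$ close to $1$. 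Now Goldstine's theorem, applied to $F_j$ together with the two functionals $f$ and $S_j^*y_j^*$ of $X^*$, yields $x_j\in B_X$ with $f(x_j)$ close to $1$ and $y_j^*(S_jx_j)=(S_j^*y_j^*)(x_j)$ close to $1$; since $\Vert x_j\Vert\geq f(x_j)$, normalising we may take $x_j\in S_X$ with both $f(x_j)$ and $\Vert S_jx_j\Vert$ as close to $1$ as we like.

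Having fixed $x_1,\dots,x_m$, I would then apply octahedrality of $Y$ to the finite-dimensional subspace $E':=\mathrm{span}\{S_1x_1,\dots,S_mx_m\}\subseteq Y$, obtaining $y_0\in S_Y$ with $\Vert z+\mu y_0\Vert\geq(1-\varepsilon_1)(\Vert z\Vert+\vert\mu\vert)$ for all $z\in E'$ and all scalars $\mu$. Setting $T:=f\otimes y_0$, for each $j$ and each $\lambda$ the triangle inequality and the choice of $y_0$ give
\[
\Vert S_j+\lambda T\Vert\;\geq\;\Vert S_jx_j+\lambda f(x_j)y_0\Vert\;\geq\;(1-\varepsilon_1)\bigl(\Vert S_jx_j\Vert+\vert\lambda\vert\bigr)-\vert\lambda\vert\bigl(1-f(x_j)\bigr),
\]
and since $\Vert S_jx_j\Vert$ and $f(x_j)$ can be taken arbitrarily close to $1$, a suitable a priori choice of $\varepsilon_1$ and of the approximation parameters makes the right-hand side at least $(1-\varepsilon)(1+\vert\lambda\vert)$ for every scalar $\lambda$ — it is enough to compare the constant terms and the coefficients of $\vert\lambda\vert$ in the two affine (in $\vert\lambda\vert$) expressions. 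This is the required estimate.

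I expect the main obstacle to be exactly the passage where $n(X^*,f)=1$ is turned into a genuine test vector in $S_X$: a priori the functional $f$ and the direction $x_j^*$ in which $S_j$ nearly attains its norm may pull against one another, and it is precisely the defining inequality of $n(X^*,f)=1$ — applied at the bidual level and then pushed down by Goldstine to the pair $\{f,\,S_j^*y_j^*\}$ — that forces a single $x_j\in S_X$ on which both $f$ and $\Vert S_j(\cdot)\Vert$ are nearly maximal. Everything else (the reduction to a finite net, the choice of the rank-one perturbation $f\otimes y_0$, and the final octahedrality estimate inside $Y$) is routine once this point is secured.
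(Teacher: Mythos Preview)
Your argument is correct, and it takes a genuinely different route from the paper's.

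The paper proves the result via the dual characterisation from \cite{BeLoRu}: it shows that every convex combination of $w^*$-slices of $B_{H^*}$ has diameter $2$. Concretely, using Lemma~\ref{densidad} it populates each slice $S_i$ with an elementary tensor $x_i\otimes y_i^*$; then, invoking the identity $B_X=\overline{\vert co\vert}(D^{w^*}(X^*,f))$ from \cite[Corollary~3.5]{ABR}, it assumes outright that $f(x_i)=1$. Fixing these $x_i$, the condition $x_i\otimes y^*\in S_i$ becomes a $w^*$-slice condition on $y^*\in B_{Y^*}$, and octahedrality of $Y$ (via Proposition~\ref{prstrong-dual}) produces $u_i^*,v_i^*$ in those slices and $y\in S_Y$ with $(u_i^*-v_i^*)(y)>2-\delta$; the rank-one operator $T=f\otimes y$ then witnesses $\mathrm{diam}(C)\geq 2-\delta$.

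Your approach works directly with the definition of octahedrality and never leaves $H$ for its dual. The key structural step --- finding for each $S_j\in S_H$ a point $x_j\in S_X$ on which both $f$ and $\Vert S_j(\cdot)\Vert$ are almost maximal --- is obtained differently: you read $n(X^*,f)=1$ as the statement that $D(X^*,f)\subseteq B_{X^{**}}$ norms $X^*$, apply it to $S_j^*y_j^*$, and push the resulting bidual functional down to $B_X$ by Goldstine. This has two advantages: it is self-contained (no need for \cite[Theorem~2.1]{BeLoRu} or \cite[Corollary~3.5]{ABR}), and it sidesteps the question of whether $D^{w^*}(X^*,f)$ is nonempty (i.e.\ whether $f$ attains its norm), working instead with approximate attainment. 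On the other hand, the paper's slice-based route fits naturally into the duality framework exploited throughout (e.g.\ Corollary~\ref{SD2P}), and makes the link with the SD2P of $X\widehat{\otimes}_\pi Y$ immediate. Both arguments ultimately hinge on the same rank-one witness $T=f\otimes y_0\in X^*\otimes Y\subseteq H$.
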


\begin{proof} By \cite[Theorem 2.1]{BeLoRu}, $H$ has octahedral norm if and only
if every convex combination of $w^*$-slices of $H^*$ has diameter
$2$. Let $C:=\sum_{i=1}^n \lambda_i S_i$ a convex combination of
$w^*$-slices in $B_{H^*}$. We can assume that there exist
$\varepsilon\in\real ^+$ and $A_i\in S_{H}$ such that
$S_i=S(B_{H^*},A_i,\varepsilon)$ for every $i\in\{1,\ldots, n\}$.

Fix $\delta\in \real ^+$. By lemma \ref{densidad} we can assume
that there exists $x_i\in S_X$,  $y_i^*\in S_{Y^*}$ such that
$x_i\otimes y_i^*\in S_i$ for all $i\in \{1,\ldots, n\}$. Thus
$\sum_{i=1}^n \lambda_i x_i\otimes y_i^*\in C$.

As $B_X=\overline{\vert co\vert}(D^{w^*}(X^*,f))$ \cite[Corollary
3.5]{ABR} we can assume that $x_i\in D^{w^*}(X^*,f)\ \forall
i\in\{1,\ldots, n\}$.

Fix $i\in\{1,\ldots, n\}$. For $y^*\in B_{Y^*}$ we have
$x_i\otimes y^*\in S_{H^*}$. Hence

$$x_i\otimes y^*\in S_i\Leftrightarrow  A_i(x_i\otimes y^*)>1-\varepsilon\Leftrightarrow A_i(x_i)(y^*)>1-\varepsilon.$$
 Thus

$$x_i\otimes y^*\in S_i\Leftrightarrow y^*\in S(B_{Y^*},A_i(x_i),\varepsilon).$$

Since $\sum_{i=1}^n \lambda_i S(B_{Y^*},A_i(x_i),\varepsilon)$ is
a convex combination of $w^*$-slices in $B_{Y^*}$ we deduce by
corollary \ref{prstrong-dual} and since $Y$ has octahedral norm
the existence of $u_i^*,v_i^*\in B_{Y^*}$ such that $x_i\otimes
u_i^*, x_i\otimes v_i^*\in S_i$ for $i\in \{1,\ldots, n\}$, and
$y\in S_{Y}$ such that

$$y(u_i^*-v_i^*)>2-\delta\ \ \ \forall i\in\{1,\ldots, n\}.$$

Then $\sum_{i=1}^n \lambda_i x_i\otimes u_i^*, \sum_{i=1}^n
\lambda_i x_i\otimes v_i^*\in C$.

Define $T:X\rightarrow Y$ by $T(x):=f(x)y$ for $x\in X$. Clearly
$\Vert T\Vert =1$ and $T\in H$. Thus

$$diam(C)\geq \left \Vert \sum_{i=1}^n \lambda_i x_i\otimes u_i^*-\sum_{i=1}^n \lambda_i x_i\otimes v_i^* \right \Vert=
\left \Vert \sum_{i=1}^n \lambda_i x_i\otimes (u_i^*-v_i^*)\right
\Vert\geq$$
$$\geq \sum_{i=1}^n \lambda_i T(x_i)(u_i^*-v_i^*)=\sum_{i=1}^n \lambda_i f(x_i)y(u_i^*-v_i^*)>(2-\delta)\sum_{i=1}^n \lambda_i =2-\delta.$$

From the arbitrariness of $\delta$ we deduce that $diam(C)=2$ and
the theorem is proved.
\end{proof}

From the symmetry in the proof of the above theorem we get the
following

\begin{theorem}\label{teostrongbis}
Let $X,Y$ be Banach spaces. Assume that $X^*$ has octahedral norm
and that there exists $f\in S_{Y}$ such that $n(Y,f)=1$. Let be
$H$ a closed subspace of $L(X,Y)$ such that $X^*\otimes Y\subseteq
H$. Then $H$ has octahedral norm.
\end{theorem}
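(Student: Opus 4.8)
The plan is to reprove the statement from scratch by mirroring the proof of Theorem \ref{teostrong}, interchanging the roles of $X^*$ and $Y$: where that proof perturbs the first tensor factor using $n(X^*,f)=1$ and then invokes Proposition \ref{prstrong-dual} for $Y$, I would perturb the second factor using $n(Y,f)=1$ and invoke Proposition \ref{prstrong} for $X$. (A transposition shortcut, embedding $L(X,Y)$ isometrically into $L(Y^*,X^*)$ and applying Theorem \ref{teostrong} there, does not quite work: the image of $X^*\otimes Y$ is only $Y\otimes X^*$, not the full $Y^{**}\otimes X^*$ demanded by that theorem.) So, as in Theorem \ref{teostrong}, I would first use \cite[Theorem 2.1]{BeLoRu} to reduce the claim to showing that an arbitrary convex combination $C=\sum_{i=1}^n\lambda_i S_i$ of $w^*$-slices $S_i=S(B_{H^*},A_i,\varepsilon)$ of $B_{H^*}$ (with $A_i\in S_H$) has diameter $2$; fix $\delta>0$ and use Lemma \ref{densidad} to pick $x_i\in S_X$, $y_i^*\in S_{Y^*}$ with $x_i\otimes y_i^*\in S_i$.

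The first substantive move is to arrange that $y_i^*\in D(Y,f)$ for every $i$. Here I would use that $n(Y,f)=1$ says exactly that $D(Y,f)$ is a norming subset of $B_{Y^*}$, whence $B_{Y^*}=\overline{\vert co\vert}^{w^*}(D(Y,f))$ by the bipolar theorem --- notice this step is actually easier than its counterpart in Theorem \ref{teostrong}, which needed \cite[Corollary 3.5]{ABR} to pass from a dual space to its predual, a descent not required here. Since $y^*\mapsto x_i\otimes y^*$ is $w^*$-to-$w^*$ continuous on $B_{Y^*}$ and $S_i$ is relatively $w^*$-open, I would approximate $y_i^*$ by some $\sum_j\mu_j\varepsilon_j v_j\in\vert co\vert(D(Y,f))$ ($v_j\in D(Y,f)$, $\varepsilon_j=\pm1$) whose image $x_i\otimes\sum_j\mu_j\varepsilon_j v_j$ still lies in $S_i$; as $S_i$ is a slice, some summand already satisfies $\varepsilon_{j_0}v_{j_0}(A_i(x_i))>1-\varepsilon$, so, replacing $x_i$ by $\varepsilon_{j_0}x_i$ and $y_i^*$ by $v_{j_0}$, I may assume $x_i\in S_X$, $y_i^*\in D(Y,f)$ and $x_i\otimes y_i^*\in S_i$.

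Next, with $y_i^*$ fixed, I would observe that $\{x\in B_X:\ x\otimes y_i^*\in S_i\}=\{x\in B_X:\ y_i^*(A_i(x))>1-\varepsilon\}$; since $x_i$ belongs to this set, the functional $x\mapsto y_i^*(A_i(x))$ has norm greater than $1-\varepsilon$, and after rescaling it this set becomes a genuine slice $S(B_X,h_i^*,\varepsilon_i')$ with $h_i^*\in S_{X^*}$ and $\varepsilon_i'\in(0,1)$, still containing $x_i$. Therefore $\sum_{i=1}^n\lambda_i S(B_X,h_i^*,\varepsilon_i')$ is a convex combination of slices of $B_X$, which has diameter $2$ because $X^*$ has octahedral norm (again by \cite[Theorem 2.1]{BeLoRu}). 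Proposition \ref{prstrong} then yields $u_i,w_i$ in the $i$-th slice --- equivalently $u_i\otimes y_i^*,\,w_i\otimes y_i^*\in S_i$, so $\sum_{i=1}^n\lambda_i u_i\otimes y_i^*$ and $\sum_{i=1}^n\lambda_i w_i\otimes y_i^*$ belong to $C$ --- together with $\phi\in S_{X^*}$ such that $\phi(u_i-w_i)>2-\delta$ for all $i$.

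Finally, exactly as in Theorem \ref{teostrong}, I would set $T:X\to Y$, $T(x):=\phi(x)f$, so that $T\in X^*\otimes Y\subseteq H$ with $\Vert T\Vert=1$, and estimate
$$diam(C)\ \geq\ \Bigl\Vert\sum_{i=1}^n\lambda_i(u_i-w_i)\otimes y_i^*\Bigr\Vert\ \geq\ \sum_{i=1}^n\lambda_i\,y_i^*\bigl(T(u_i-w_i)\bigr)\ =\ \sum_{i=1}^n\lambda_i\,\phi(u_i-w_i)\,y_i^*(f)\ >\ 2-\delta,$$
using $y_i^*(f)=1$; letting $\delta\to0$ gives $diam(C)=2$, and \cite[Theorem 2.1]{BeLoRu} finishes the proof. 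The one place I expect to need care is the second paragraph, namely identifying correctly which of $Y$, $Y^*$ plays the ``predual'' role so that the density statement and the $w^*$-continuity used there are the right dualisations of those in Theorem \ref{teostrong}; the rescaling in the third paragraph and the closing computation are routine.
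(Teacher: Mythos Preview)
Your proposal is correct and is precisely the ``symmetry'' the paper alludes to in lieu of a proof: you exchange the roles of the two tensor factors in the argument of Theorem~\ref{teostrong}, using $B_{Y^*}=\overline{\vert co\vert}^{\,w^*}(D(Y,f))$ in place of $B_X=\overline{\vert co\vert}(D^{w^*}(X^*,f))$ and Proposition~\ref{prstrong} in place of Proposition~\ref{prstrong-dual}. Your remark that the density step is actually \emph{easier} on this side (a direct bipolar argument, with no need for \cite[Corollary~3.5]{ABR}), and your care in rescaling the functional $x\mapsto y_i^*(A_i(x))$ to obtain a genuine slice of $B_X$, are both to the point; the rest matches the paper's intended argument verbatim.
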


The sufficient condition obtained in the above theorem is
satisfied when one assume an infinite-dimensional condition on the
Cunningham algebra.

\begin{corollary}\label{corocunin}
Let $X,Y$ be Banach spaces. Assume that $C(Y^{(\infty})$ is
infinite-dimensional and that there exists $f\in S_{X^{*}}$ such
that $n(X^{*},f)=1$. Let be $H$ a closed subspace of $L(X,Y)$ such
that $X^*\otimes Y\subseteq H$. Then $H$ has octahedral norm.
\end{corollary}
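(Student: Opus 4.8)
\textbf{Proof proposal for Corollary \ref{corocunin}.}

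The plan is to deduce Corollary \ref{corocunin} from Theorem \ref{teostrong} by checking that the hypothesis ``$C(Y^{(\infty})$ is infinite-dimensional'' forces $Y$ to have octahedral norm. Once this reduction is in place, the corollary follows immediately, since the other hypothesis (existence of $f\in S_{X^*}$ with $n(X^*,f)=1$) and the conclusion are verbatim those of Theorem \ref{teostrong}.

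The main work is therefore the implication: if $C(Y^{(\infty})$ is infinite-dimensional, then $Y$ has octahedral norm. First I would recall, as noted in the introduction, that $C(Z)$ is linearly isometric to $Z(Z^*)$, so $C(Y^{(\infty})$ infinite-dimensional means $Z((Y^{(\infty})^*)$ is infinite-dimensional; and since $Z(W)\cong \mathcal{C}(K_W)$ for a compact Hausdorff $K_W$, an infinite-dimensional centralizer yields a nontrivial $L$-decomposition structure. Concretely, an infinite-dimensional $\mathcal{C}(K)$ contains, for every $\varepsilon>0$, arbitrarily many norm-one functions with pairwise disjoint supports, which translated through the centralizer representation produces, in the relevant dual space, an $\ell_1$-type sequence of $L$-orthogonal elements. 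The standard consequence (this is the route by which spaces with infinite-dimensional centralizer are known to have diameter-two-type properties, cf. \cite{ABR}) is that $Y^{(\infty}$, and hence by a weak$^*$-density/reflection argument $Y$ itself, has octahedral norm. An alternative, cleaner packaging: it is known that an infinite-dimensional $C(Y^{(\infty})$ forces every convex combination of slices of $B_Y$ to have diameter $2$ (the SD2P), which by \cite[Theorem 2.1]{BeLoRu} applied in the appropriate dual formulation is equivalent to $Y$ having octahedral norm when one instead wants octahedrality of $Y$ directly—so I would verify the precise duality bookkeeping here, namely that what we actually need is octahedrality of $Y$ and that this is what the Cunningham-algebra hypothesis on $Y^{(\infty}$ delivers.

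With that implication established, the proof concludes in one line: by the previous paragraph $Y$ has octahedral norm, and by hypothesis there is $f\in S_{X^*}$ with $n(X^*,f)=1$; hence all hypotheses of Theorem \ref{teostrong} hold for $X$, $Y$ and the subspace $H$, so $H$ has octahedral norm.

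I expect the main obstacle to be the first implication, specifically making precise how infinite-dimensionality of the Cunningham algebra of $Y^{(\infty}$ transfers to octahedrality of $Y$ (rather than merely of $Y^{(\infty}$): one must either cite the relevant result linking infinite-dimensional centralizers/Cunningham algebras to SD2P-type behaviour and then invoke the $w^*$-slice characterization of octahedrality from \cite{BeLoRu}, or argue directly with $L$-orthogonal sequences and a reflection argument from $Y^{(\infty}$ down to $Y$. The tensor-product / operator-space part is entirely absorbed into Theorem \ref{teostrong}, so no new estimates are needed there.
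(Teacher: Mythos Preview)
Your overall strategy is exactly the paper's: reduce to Theorem~\ref{teostrong} by showing that the Cunningham-algebra hypothesis on $Y^{(\infty}$ forces $Y$ to have octahedral norm, then apply the theorem verbatim. The paper's proof is a two-line citation: from $C(Y^{(\infty})$ infinite-dimensional one gets $Z((Y^{(\infty})^*)$ infinite-dimensional, and then \cite[Theorem~3.4]{ABL} gives that $Y^*$ has the strong diameter two property, whence $Y$ has octahedral norm and Theorem~\ref{teostrong} applies.

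The one point where your write-up wobbles is the ``duality bookkeeping'' you flag yourself. In your alternative packaging you say the hypothesis forces SD2P for $B_Y$; that would yield octahedrality of $Y^*$, not of $Y$, which is the wrong direction. The correct statement (and what \cite[Theorem~3.4]{ABL} actually gives) is SD2P for $Y^*$: every convex combination of \emph{slices} of $B_{Y^*}$ has diameter~$2$, hence in particular every convex combination of $w^*$-slices does, and by \cite[Theorem~2.1]{BeLoRu} this is precisely octahedrality of $Y$. No reflection argument from $Y^{(\infty}$ down to $Y$ is needed; the cited result from \cite{ABL} already lands you at the level of $Y^*$. Once you make that correction, your proof coincides with the paper's.
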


\begin{proof} Since $C(Y^{(\infty})$ is
infinite-dimensional, we have that $Z((Y^{(\infty})^*)$ is
infinite-dimensional. By \cite[Theorem 3.4]{ABL}, $Y^*$ has the
strong diameter two property, so theorem \ref{teostrong} applies.
\end{proof}

Keeping in mind the projective tensor product, we improve the
thesis of \cite[Theorem 3.6]{ABR} invoking theorem
\ref{teostrong}.

\begin{corollary}
Let $X,Y$ be Banach spaces. Assume that $Z(Y^{(\infty})$ is
infinite-dimensional and that there exists $f\in S_{X^{*}}$ such
that $n(X^{*},f)=1$. Then $X\widehat{\otimes}_\pi Y$ has the
strong diameter two property.
\end{corollary}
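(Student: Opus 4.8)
The plan is to pass to the dual. The topological dual of $X\widehat{\otimes}_\pi Y$ is, isometrically, the space $\mathcal B(X\times Y)$ of bounded bilinear forms on $X\times Y$, and this space is in turn identified isometrically with $L(X,Y^*)$ through $B\mapsto(x\mapsto B(x,\cdot))$. Moreover, by the dual characterisation of octahedrality recalled in the Introduction (from \cite[Theorem~2.1]{BeLoRu}), a Banach space $Z$ has the strong diameter two property if and only if $Z^*$ has octahedral norm. Taking $Z=X\widehat{\otimes}_\pi Y$, it therefore suffices to prove that $L(X,Y^*)$ has octahedral norm.

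To establish this I would invoke Theorem~\ref{teostrong} with $Y^*$ in the role of the codomain and with $H:=L(X,Y^*)$ itself, which is trivially a closed subspace of $L(X,Y^*)$ containing $X^*\otimes Y^*$. Theorem~\ref{teostrong} then yields that $L(X,Y^*)$ is octahedral as soon as (i) there is $f\in S_{X^*}$ with $n(X^*,f)=1$ --- which is exactly one of our standing hypotheses --- and (ii) $Y^*$ has octahedral norm.

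Thus everything reduces to deducing (ii) from the hypothesis that $Z(Y^{(\infty)})$ is infinite-dimensional, and this I would do exactly as in the proof of Corollary~\ref{corocunin}: the isometry $C(W)\cong Z(W^*)$ together with \cite[Theorem~3.4]{ABL} turns the infinite-dimensionality of $Z(Y^{(\infty)})$ into the fact that every convex combination of $w^*$-slices of $B_{Y^{**}}$ has diameter $2$ --- note that any such combination is in particular a convex combination of slices, so a strong-diameter-two statement for $Y^{**}$ (or for any later even dual) is more than enough --- and by \cite[Theorem~2.1]{BeLoRu} this is precisely the assertion that $Y^*$ has octahedral norm. Feeding (ii) back into Theorem~\ref{teostrong} gives that $L(X,Y^*)=(X\widehat{\otimes}_\pi Y)^*$ has octahedral norm, whence, by the dual characterisation above, $X\widehat{\otimes}_\pi Y$ has the strong diameter two property.

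The genuinely substantive point is step (ii): one has to be sure that the centralizer condition imposed on $Y^{(\infty)}$ really does propagate --- through the Cunningham--centralizer correspondence and \cite{ABL} --- to octahedrality of $Y^*$. Everything else is bookkeeping: the identification of $(X\widehat{\otimes}_\pi Y)^*$ with a space of operators, the octahedral/strong-diameter-two duality, and the observation that $H=L(X,Y^*)$ is an admissible subspace in Theorem~\ref{teostrong} are all either routine or already available in the excerpt. One should also simply check that the hypothesis is posed on $X^*$ exactly as Theorem~\ref{teostrong} requires, so that no passage to the symmetric Theorem~\ref{teostrongbis} is needed.
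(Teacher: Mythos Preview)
Your proposal is correct and follows essentially the same route the paper intends: dualize via $(X\widehat{\otimes}_\pi Y)^*=L(X,Y^*)$, apply Theorem~\ref{teostrong} with $H=L(X,Y^*)$, and supply the hypothesis that $Y^*$ is octahedral from the assumption on $Z(Y^{(\infty)})$ through \cite[Theorem~3.4]{ABL}. The only cosmetic simplification is that the Cunningham--centralizer isometry $C(W)\cong Z(W^*)$ is not actually needed here (it was used in Corollary~\ref{corocunin} because the hypothesis there was on $C$, not $Z$): \cite[Theorem~3.4]{ABL} applied directly gives that $Y$ has SD2P, which by the duality in the Introduction is exactly the statement that $Y^*$ is octahedral.
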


The next result follows the lines of  the theorem \ref{teostrong},
but for another kind of spaces and we get, as a consequence, the
complete stability of octahedral norms in spaces of operators.

\begin{theorem}\label{teostrong2}
Let $X,Y$ be Banach spaces. Assume that $X^*$ and $Y$ have
octahedral norm. Let be $H$ a closed subspace of $L(X,Y)$ such
that $X^*\otimes Y\subseteq H$. Then $H$ has octahedral norm.
\end{theorem}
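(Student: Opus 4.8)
The plan is to mimic the argument of Theorem \ref{teostrong}, working with convex combinations of $w^*$-slices in $B_{H^*}$ and using Lemma \ref{densidad} to represent the relevant functionals inside $S_X\otimes S_{Y^*}$; the new ingredient is that, since now we only assume $X^*$ and $Y$ are octahedral (with no numerical-index hypothesis on either factor), we must produce the ``separating'' operator $T\in S_H$ by a genuinely two-sided construction rather than by the rank-one recipe $x\mapsto f(x)y$. By \cite[Theorem 2.1]{BeLoRu}, it suffices to show that every convex combination $C=\sum_{i=1}^n\lambda_i S_i$ of $w^*$-slices of $B_{H^*}$ has diameter $2$. Fix $\varepsilon>0$ and $\delta>0$ small. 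As in the earlier proof, write $S_i=S(B_{H^*},A_i,\varepsilon)$ with $A_i\in S_H$, and by Lemma \ref{densidad} choose $x_i\in S_X$, $y_i^*\in S_{Y^*}$ with $x_i\otimes y_i^*\in S_i$.

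The first step is to fix the first coordinates. For each $i$, the computation $x_i\otimes y^*\in S_i\iff y^*\in S(B_{Y^*},A_i(x_i),\varepsilon)$ from the proof of Theorem \ref{teostrong} still holds, so $\sum_i\lambda_i S(B_{Y^*},A_i(x_i),\varepsilon)$ is a convex combination of $w^*$-slices of $B_{Y^*}$. Since $Y$ is octahedral, this convex combination has diameter $2$, so by Proposition \ref{prstrong-dual} there are $u_i^*,v_i^*\in B_{Y^*}$ with $x_i\otimes u_i^*,\ x_i\otimes v_i^*\in S_i$ and a vector $y\in S_Y$ with $y(u_i^*-v_i^*)>2-\delta$ for all $i$; moreover $u_i^*(y)>1-\delta$ and $v_i^*(-y)>1-\delta$. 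In particular $\sum_i\lambda_i x_i\otimes u_i^*$ and $\sum_i\lambda_i x_i\otimes v_i^*$ both lie in $C$. Now I would turn to the second coordinate: the functionals $x_i\mapsto$ (the first factor) live in $B_{X^{**}}$, and since $X^*$ is octahedral, every convex combination of slices of $B_X$ has diameter $2$; equivalently, $B_{X^{**}}=\overline{\lvert co\rvert}^{w^*}$ of a suitable set, and one can extract $g\in S_{X^*}$ with $g(x_i)>1-\delta$ for all $i$ — here one uses that the $x_i$ can be taken in a slice-type set coming from octahedrality of $X^*$, exactly as the $D^{w^*}(X^*,f)$ reduction was used before, but now via the renorming-free characterization of octahedrality rather than $n(X^*,f)=1$.

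With $g\in S_{X^*}$ and $y\in S_Y$ in hand satisfying $g(x_i)>1-\delta$ and $y(u_i^*-v_i^*)>2-\delta$, I define $T:X\to Y$ by $T(x):=g(x)\,y$; then $\Vert T\Vert=\Vert g\Vert\,\Vert y\Vert=1$, and $T=g\otimes y\in X^*\otimes Y\subseteq H$. The diameter estimate then runs as in Theorem \ref{teostrong}:
$$diam(C)\geq\left\Vert\sum_{i=1}^n\lambda_i x_i\otimes(u_i^*-v_i^*)\right\Vert\geq\sum_{i=1}^n\lambda_i\,T(x_i)(u_i^*-v_i^*)=\sum_{i=1}^n\lambda_i\,g(x_i)\,y(u_i^*-v_i^*),$$
and each factor $g(x_i)y(u_i^*-v_i^*)>(1-\delta)(2-\delta)$, so $diam(C)>(1-\delta)(2-\delta)\sum_i\lambda_i=(1-\delta)(2-\delta)$. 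Letting $\delta\to0$ gives $diam(C)=2$, and since $C$ was an arbitrary convex combination of $w^*$-slices of $B_{H^*}$, another application of \cite[Theorem 2.1]{BeLoRu} shows $H$ has octahedral norm.

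The main obstacle I anticipate is the second-coordinate reduction: in Theorem \ref{teostrong} the hypothesis $n(X^*,f)=1$ let one force all the first factors $x_i$ into $D^{w^*}(X^*,f)$ and then read off $f(x_i)$ close to $1$ for a fixed $f$. Without a numerical index, octahedrality of $X^*$ only says convex combinations of slices of $B_X$ have diameter $2$, so I must instead argue that the first factors $x_i$ appearing above can be perturbed (staying inside the slices $S_i$, using the freedom in Lemma \ref{densidad} and the fact that $x_i\otimes y^*$ stays in $S_i$ as long as $y^*$ stays in the corresponding slice of $B_{Y^*}$) into a configuration on which a single norming $g\in S_{X^*}$ is nearly extremal on all of them simultaneously — i.e.\ run Proposition \ref{prstrong} (the primal version) for an auxiliary convex combination of slices of $B_X$ determined by the $A_i$. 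Making this interleaving of the two octahedrality hypotheses precise, so that the choices of $x_i$ and of $(u_i^*,v_i^*)$ are mutually compatible within the same slices $S_i$, is the delicate point; everything else is the same bookkeeping as in Theorem \ref{teostrong}.
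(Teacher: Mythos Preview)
Your overall strategy is the paper's strategy, and the rank-one operator $T=g\otimes y\in X^*\otimes Y\subseteq H$ is exactly what is used. The gap is precisely the one you flag in your last paragraph, and it is real: in the order you chose, it cannot be closed without extra work. You first fix the $x_i$, use octahedrality of $Y$ to produce $u_i^*,v_i^*$ and $y$, and only then try to find $g\in S_{X^*}$ with $g(x_i)>1-\delta$. For arbitrary points $x_1,\dots,x_n\in S_X$ such a $g$ need not exist; Proposition~\ref{prstrong} does not give a functional norming prescribed points, it lets you \emph{replace} the points inside the given slices. But if you now replace the $x_i$ by some $w_i$, you would need $w_i\otimes u_i^*\in S_i$ \emph{and} $w_i\otimes v_i^*\in S_i$ simultaneously, i.e.\ $w_i\in S(B_X,A_i^*(u_i^*),\varepsilon)\cap S(B_X,A_i^*(v_i^*),\varepsilon)$, and Proposition~\ref{prstrong} only handles one slice per index.

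The interleaving difficulty disappears if you simply reverse the order of the two reductions, which is what the paper does. Starting from $x_i\otimes y_i^*\in S_i$, first keep the $y_i^*$ fixed and observe (dually to your computation) that $x\otimes y_i^*\in S_i\iff x\in S(B_X,A_i^*(y_i^*),\varepsilon)$, where $A_i^*(y_i^*)(x):=y_i^*(A_i(x))$. Since $X^*$ is octahedral, $\sum_i\lambda_i S(B_X,A_i^*(y_i^*),\varepsilon)$ has diameter $2$; apply Proposition~\ref{prstrong} to obtain $w_i\in S(B_X,A_i^*(y_i^*),\varepsilon)$ and $f\in S_{X^*}$ with $f(w_i)>1-\delta$ for all $i$. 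Now the $w_i$ are fixed once and for all, and you run your first step with $w_i$ in place of $x_i$: octahedrality of $Y$ plus Proposition~\ref{prstrong-dual} give $u_i^*,v_i^*\in S(B_{Y^*},A_i(w_i),\varepsilon)$ and $y\in S_Y$ with $y(u_i^*-v_i^*)>2-\delta$. Then $w_i\otimes u_i^*,\,w_i\otimes v_i^*\in S_i$, the inequality $f(w_i)>1-\delta$ is untouched, and your displayed estimate with $T=f\otimes y$ yields $diam(C)>(1-\delta)(2-\delta)$ exactly as you wrote. No ``mutual compatibility'' issue arises because after the first reduction the first coordinates are never moved again.
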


\begin{proof}
Again by \cite[Theorem 2.1]{BeLoRu}, $H$ has octahedral norm if
and only if every convex combination of $w^*$-slices of $H^*$ has
diameter $2$. Let $C:=\sum_{i=1}^n \lambda_i S_i$ a convex
combination of $w^*$-slices in $B_{H^*}$. Hence there exist
$\varepsilon\in\real ^+$ and $A_i\in S_{H}$ such that
$S_i=S(B_{H^*},A_i,\varepsilon)$ for every $i\in\{1,\ldots, n\}$.
Fix $\delta\in \real ^+$.

By lemma \ref{densidad} we can ensure the existence of $x_i\in
S_X$,  $y_i^*\in S_{Y^*}$ such that $x_i\otimes y_i^*\in S_i$ for
all $i\in \{1,\ldots, n\}$. Thus $\sum_{i=1}^n \lambda_i
x_i\otimes y_i^*\in C$. For every $i\in\{1,\ldots, n\}$ we
consider  $A^*_i(y_i^*):X\rightarrow \real$ defined by
$A_i^*(y_i^*)(x)=y_i^*(A_i(x))$ for every $x\in X$. Then we have
that $$x\otimes y_i^*\in S_i\Leftrightarrow x\in
S(B_X,A_i^*(y_i^*), \varepsilon).$$ Now, $X^*$ has octahedral norm
and by \cite[Theorem 2.1]{BeLoRu}, we have that every convex
combination of slices of $X$ has diameter $2$.

Applying Proposition \ref{prstrong}, there exists $w_i\in
S(B_X,A_i^*(y_i^*),\varepsilon)$ and $f\in S_{X^*}$ such that for
all $i\in\{1,\ldots, n\}$

$$f(w_i)>1-\delta,$$

and

$$\sum_{i=1}^n \lambda_i w_i\otimes y_i\in C .$$

Following as in proof of theorem \ref{teostrong}, we deduce that
$diam(C)>2-\delta$. Due to the arbitrariness of $\delta$ we deduce
that $diam(C)=2$ and the theorem is proved.
\end{proof}

The first consequence of the above theorem is the stability of
SD2P for projective tensor products of Banach spaces. Indeed, it
is enough taking into account that the dual of the projective
tensor product of Banach spaces $X$ and $Y$ is the space of
operators $L(X,Y^*)$ joint to the relation between having
octahedral norm and the SD2P, as explained in the introduction,
and apply the above theorem to get the following

\begin{corollary}\label{SD2P}
The projective tensor product,  $X\widehat{\otimes}_\pi Y$, of two
Banach spaces $X$ and $Y$, verifies the SD2P whenever $X$ and $Y$
have the SD2P.
\end{corollary}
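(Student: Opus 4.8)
The plan is to deduce the corollary from Theorem~\ref{teostrong2} by passing to the dual. First I would recall the two identifications that make the translation possible. On the one hand, the topological dual of $X\widehat{\otimes}_\pi Y$ is isometrically the space of bounded bilinear forms $\mathcal B(X\times Y)$, which in turn is isometrically identified with $L(X,Y^*)$ via $T\mapsto ((x,y)\mapsto T(x)(y))$; thus $(X\widehat{\otimes}_\pi Y)^*=L(X,Y^*)$. On the other hand, the dual description of octahedrality from \cite[Theorem 2.1]{BeLoRu}, as recalled in the introduction, says that for any Banach space $Z$ the norm of $Z^*$ is octahedral if and only if every convex combination of slices of $B_Z$ has diameter $2$, i.e.\ if and only if $Z$ has the SD2P. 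Applying this with $Z=X\widehat{\otimes}_\pi Y$, it suffices to prove that $L(X,Y^*)$ has octahedral norm.

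Next I would invoke Theorem~\ref{teostrong2}, taking the spaces denoted there by $X$ and $Y$ to be our $X$ and $Y^*$, and taking $H=L(X,Y^*)$ itself, which is a closed subspace of itself containing $X^*\otimes Y^*$, so the structural hypothesis is met. The two remaining hypotheses are that $X^*$ and $Y^*$ have octahedral norm. But, by the very same dual characterisation of octahedrality, $X^*$ has octahedral norm because $X$ has the SD2P, and $Y^*$ has octahedral norm because $Y$ has the SD2P, both of which are assumed. Hence Theorem~\ref{teostrong2} gives that $L(X,Y^*)$ has octahedral norm, and therefore, using the characterisation once more in the reverse direction, $X\widehat{\otimes}_\pi Y$ has the SD2P.

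There is no genuine obstacle here: the statement is a formal corollary of Theorem~\ref{teostrong2}. The only point requiring a little care is keeping the bookkeeping of the duality straight, namely that it is octahedrality of the \emph{dual} that corresponds to the SD2P of the space, and that this equivalence is used three times, once to reformulate the goal and twice to reformulate the hypotheses. One might also want to double-check that Theorem~\ref{teostrong2} is applied with the arguments in the correct order, since $L(X,Y^*)$ is not symmetric in $X$ and $Y$; in this instance it is immaterial, because the hypotheses needed on $X^*$ and on $Y^*$ are of the same nature, but it is worth noting.
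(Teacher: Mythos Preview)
Your proposal is correct and follows essentially the same approach as the paper: identify $(X\widehat{\otimes}_\pi Y)^*$ with $L(X,Y^*)$, use the duality between SD2P and octahedrality of the dual to translate both the hypotheses and the conclusion, and then apply Theorem~\ref{teostrong2} with $H=L(X,Y^*)$. Your write-up is in fact more detailed than the paper's, which merely sketches this argument in the paragraph preceding the corollary.
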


This last corollary gives the stability of SD2P for projective
tensor products. However, in order to complete the study of the
behavior of SD2P for projective tensor products it would be
interesting to know if the projective tensor product of two Banach
spaces has SD2P when one assumes that only one of the spaces has
the SD2P. We think that a possible candidate to answer by the
negative this question could be $X\widehat{\otimes}_\pi \ell_2^2$,
for some Banach space $X$ with the SD2P, but we don't know the
answer.

We recall, from proof of corollary \ref{corocunin}, that given
Banach spaces $X$ and $Y$ such that $Z(X^{(\infty})$ and
$C(Y^{(\infty})$ are infinite-dimensional, then both $X^*$ and $Y$
have octahedral norm.

\begin{corollary}\label{mejor-2}
Let $X,Y$ be Banach spaces such that $Z(X^{(\infty})$ and
$C(Y^{(\infty})$ are infinite-dimensional. Let be $H$ a closed
subspace of $L(X,Y)$ such that $X^*\otimes Y\subseteq H$. Then $H$
has octahedral norm.
\end{corollary}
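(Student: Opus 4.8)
The plan is to obtain the statement as an immediate application of Theorem \ref{teostrong2}; the whole content of the proof is the verification of its two hypotheses, namely that $X^*$ has octahedral norm and that $Y$ has octahedral norm. Both facts are exactly what is recalled in the remark preceding the statement, so I would simply assemble the pieces already available in the paper and in \cite{ABL} and \cite{BeLoRu}.

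First I would treat $X$. Since $Z(X^{(\infty})$ is infinite-dimensional, \cite[Theorem 3.4]{ABL} shows that $X$ has the strong diameter two property, i.e.\ every convex combination of slices of $B_X$ has diameter $2$. By \cite[Theorem 2.1]{BeLoRu} (the dual characterisation of octahedrality recalled in the Introduction, in the form ``the norm of $X^*$ is octahedral if and only if every convex combination of slices of $B_X$ has diameter $2$''), this is equivalent to $X^*$ having octahedral norm.

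Next I would treat $Y$, arguing as in the proof of Corollary \ref{corocunin}. Since $C(E)$ is linearly isometric to $Z(E^*)$ for every Banach space $E$, the hypothesis that $C(Y^{(\infty})$ is infinite-dimensional says that $Z\bigl((Y^{(\infty})^*\bigr)$ is infinite-dimensional; identifying $(Y^{(\infty})^*$ with $(Y^*)^{(\infty}$ one gets that $Z\bigl((Y^*)^{(\infty}\bigr)$ is infinite-dimensional, whence \cite[Theorem 3.4]{ABL} applied to $Y^*$ gives that $Y^*$ has the strong diameter two property. In particular every convex combination of $w^*$-slices of $B_{Y^*}$ has diameter $2$ (each $w^*$-slice being a slice), so once again by \cite[Theorem 2.1]{BeLoRu} the norm of $Y$ is octahedral.

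With $X^*$ and $Y$ both octahedral, Theorem \ref{teostrong2} applies verbatim to the pair $X$, $Y$ and the closed subspace $H$ with $X^*\otimes Y\subseteq H$, and yields that $H$ has octahedral norm. The only point that is not a literal citation is the juggling with iterated duals in the middle step — ensuring that the infinite-dimensionality of $C(Y^{(\infty})$ is inherited by $Z\bigl((Y^*)^{(\infty}\bigr)$ so that \cite[Theorem 3.4]{ABL} can be fed with $Y^*$ — but this is precisely the bookkeeping already carried out in Corollary \ref{corocunin}, so I expect no genuine obstacle.
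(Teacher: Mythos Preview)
Your proposal is correct and matches the paper's approach exactly: the paper does not give a separate proof of this corollary, relying instead on the remark immediately preceding it (which recalls from the proof of Corollary~\ref{corocunin} that the hypotheses force $X^*$ and $Y$ to have octahedral norm) and then implicitly invoking Theorem~\ref{teostrong2}. Your write-up simply unpacks that remark using \cite[Theorem~3.4]{ABL} and \cite[Theorem~2.1]{BeLoRu}, which is precisely the intended argument.
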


As a new consequence, we get a result improving \cite[Theorem
2.6]{ABR}, where from the same hypotheses is obtained that every
nonempty relatively weakly open subset of the unit ball
 in $X\widehat{\otimes}_\pi Y$ has diameter $2$.

\begin{corollary}\label{mejor}

Let $X,Y$ be Banach spaces such that $Z(X^{(\infty})$ and
$Z(Y^{(\infty})$ are infinite-dimensional. Then the space
$X\widehat{\otimes}_\pi Y$ has the strong diameter two property.

\end{corollary}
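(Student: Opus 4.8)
The plan is to assemble this from the stability result already in hand together with the translation of the centralizer hypotheses into octahedrality/SD2P. The cleanest route goes through Corollary~\ref{SD2P}. First I would note that since $Z(X^{(\infty})$ is infinite-dimensional, $X^*$ has octahedral norm: this is exactly the implication recalled in the proof of Corollary~\ref{corocunin} and restated just before Corollary~\ref{mejor-2} (one passes to $(X^{(\infty})^*$, uses the identification $C(W)\cong Z(W^*)$ together with the stability of the $\infty$-dual, and invokes \cite[Theorem~3.4]{ABL}). By the dual characterization stated in the introduction, "$X^*$ has octahedral norm" is equivalent to "every convex combination of slices of $B_X$ has diameter $2$", i.e. to "$X$ has the SD2P". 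Applying the same argument to $Y$, the hypothesis that $Z(Y^{(\infty})$ is infinite-dimensional gives that $Y$ has the SD2P.

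Once both $X$ and $Y$ are known to have the SD2P, Corollary~\ref{SD2P} applies verbatim and yields that $X\widehat{\otimes}_\pi Y$ has the SD2P, which is the assertion.

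Alternatively, and more in the spirit of the operator-space theorems of this section, one can argue directly on the dual. Recall that $(X\widehat{\otimes}_\pi Y)^*$ is isometrically the space of operators $L(X,Y^*)$, and that $X\widehat{\otimes}_\pi Y$ has the SD2P precisely when $L(X,Y^*)$ has octahedral norm. As above, the hypotheses give that $X^*$ has octahedral norm and that $Y^*$ has octahedral norm. Hence Theorem~\ref{teostrong2}, applied to the pair $(X,Y^*)$ with $H=L(X,Y^*)$ (note $X^*\otimes Y^*\subseteq L(X,Y^*)$), shows that $L(X,Y^*)=(X\widehat{\otimes}_\pi Y)^*$ has octahedral norm, and we conclude as before. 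This makes explicit that Corollary~\ref{mejor} is just the "SD2P side" of Corollary~\ref{mejor-2}.

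I do not expect a genuinely hard step here: the statement is a bookkeeping assembly of Corollary~\ref{SD2P} (equivalently, Theorem~\ref{teostrong2}) with the centralizer-to-SD2P implication of \cite{ABL}. The only points requiring care are the correct translation of the hypotheses $Z(X^{(\infty})$, $Z(Y^{(\infty})$ infinite-dimensional into the SD2P of $X$ and $Y$ (equivalently, octahedrality of $X^*$ and $Y^*$), via $C(W)\cong Z(W^*)$ and the behavior of the $\infty$-dual, and recalling that the dual of the projective tensor product is the operator space $L(X,Y^*)$.
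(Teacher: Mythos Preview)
Your proposal is correct and follows exactly the route the paper intends: the corollary is stated without proof as an immediate consequence of Corollary~\ref{SD2P} (equivalently, Theorem~\ref{teostrong2} applied to $H=L(X,Y^*)$) once one invokes \cite[Theorem~3.4]{ABL} to turn $Z(X^{(\infty})$ and $Z(Y^{(\infty})$ infinite-dimensional into the SD2P of $X$ and $Y$. One minor remark: the detour through $C(W)\cong Z(W^*)$ is unnecessary here, since the hypotheses are already on $Z$ and \cite[Theorem~3.4]{ABL} applies directly to give $X$ and $Y$ the SD2P; that identification was only needed in Corollary~\ref{corocunin} because the hypothesis there was on $C(Y^{(\infty})$.
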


Let $X$ be a Banach space. For $u\in S_X$ we define the
{\it{roughness of $X$ at $u$}}, denoted by $\eta(X,u)$, by the
equality

$$\eta(X,u)=\limsup\limits_{\Vert h\Vert\rightarrow 0} \frac{\Vert u+h\Vert+\Vert u-h\Vert-2}{\Vert h\Vert}$$

Let us remark that, for $u\in S_X$, we have that $\eta(X,u)=0$ if,
and only if, the norm is Fr\'echet differentiable at $u$
\cite[Lemma 1.3]{DGZ}.

For $\varepsilon\in\mathbb R^+$, the Banach space is said to be
{\it{$\varepsilon$-rough}} if, for every $u\in S_X$ we have
$\eta(X,u)\geq \varepsilon$. We say that $X$ is {\it{rough}}
whenever it is $\varepsilon$-rough for some $\varepsilon\in\mathbb
R^+$, and we say that $X$ is {\it{non-rough}} otherwise.

Our next result is a necessary condition in order to a space of
operators has an octahedral norm.

\begin{proposition}\label{necesaria} Let $X,Y$ be Banach spaces and let be $H$ a closed subspace of $L(X,Y)$ with octahedral norm such that $X^*\otimes
Y\subseteq H$. Assume that $X^*$ has non-rough norm. Then $Y$ has
octahedral norm.
\end{proposition}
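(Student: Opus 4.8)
The plan is to use the dual characterization of octahedrality once more: by \cite[Theorem 2.1]{BeLoRu}, $Y$ has octahedral norm if and only if every convex combination of $w^*$-slices of $B_{Y^*}$ has diameter $2$. So fix a convex combination $C:=\sum_{i=1}^n\lambda_i S(B_{Y^*},y_i,\varepsilon)$ of $w^*$-slices (with $y_i\in S_Y$), and fix $\delta>0$; the goal is to produce two points of $C$ at distance greater than $2-\delta$. First I would transfer this data into $H^*$. For each $i$, pick $x_i^*\in S_{X^*}$; the key point is that since $X^*$ has non-rough norm, one can choose a single $f\in S_X$ (independent of $i$, after replacing it by a suitable average or by exploiting the non-roughness modulus) so that $x_i^*$ nearly attains its norm against $f$, i.e. $x_i^*(f)>1-\eta$ for a small $\eta$ to be fixed in terms of $\delta$. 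This is exactly the mechanism that non-roughness of $X^*$ buys us: the slices $S(B_{X^*},f,\eta)$ are nonempty and one has good control; more precisely, non-roughness of $X^*$ means there is some $f\in S_X$ at which $\eta(X^*,f)$ is small, hence $\mathrm{diam}\,S(B_{X^*},f,\eta)$ is small for small $\eta$, so all the chosen $x_i^*$ can be taken inside one such slice, clustered near a common point.

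With $f\in S_X$ fixed, consider for each $i$ the element $x_i^*\otimes y_i\in S_{H^*}$ (norm one by Lemma \ref{densidad}, since $X^*\otimes Y\subseteq H$) and the $w^*$-slice $S(B_{H^*},T_i,\alpha_i)$ where $T_i:=x_i^*(\cdot)\,y_i^{**}$... — more carefully, I would use that the operator $u\otimes y\mapsto$ (evaluation) behaves well: the map $Y^*\to H^*$, $y^*\mapsto x_i^*\otimes y^*$ is a $w^*$-to-$w^*$ continuous isometric embedding when tested against operators of the form $x\mapsto f(x)y'$ with $f$ as above. Concretely, define $R_i:X\to Y$ by $R_i(x):=f(x)\,y_i$; then $\Vert R_i\Vert=1$, $R_i\in X^*\otimes Y\subseteq H$, and $(x_i^*\otimes y^*)(R_i)=x_i^*(f)\,y^*(y_i)>(1-\eta)\,y^*(y_i)$. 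Hence if $y^*\in S(B_{Y^*},y_i,\varepsilon)$ then $x_i^*\otimes y^*\in S(B_{H^*},R_i,\varepsilon')$ for a slightly enlarged $\varepsilon'$. Thus $\sum_i\lambda_i\, S(B_{H^*},R_i,\varepsilon')$ is a genuine convex combination of $w^*$-slices of $B_{H^*}$; since $H$ has octahedral norm, this combination has diameter $2$, so by Proposition \ref{prstrong-dual} there are $F_i,G_i$ in these $H^*$-slices and $\Phi\in S_H$ with $(F_i-G_i)(\Phi)>2-\delta'$ for all $i$.

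Finally I would pull this back down to $Y^*$. The $H^*$-slice $S(B_{H^*},R_i,\varepsilon')$ is, via the identification above, contained in (a small enlargement of) the set $\{x_i^*\otimes y^*:y^*\in S(B_{Y^*},y_i,\varepsilon'')\}$ together with its $w^*$-closed convex hull; a routine argument (as in the converse direction of the displayed equivalences in the proofs of Theorems \ref{teostrong} and \ref{teostrong2}) lets me extract from $F_i,G_i$ points $u_i^*,v_i^*\in S(B_{Y^*},y_i,\varepsilon'')$ with $(u_i^*-v_i^*)$ large against a fixed functional, namely against $x\mapsto$ the $Y$-component of $\Phi$ evaluated suitably — cleaner: test $\sum_i\lambda_i x_i^*\otimes u_i^*-\sum_i\lambda_i x_i^*\otimes v_i^*$ against the operator $\Phi$, and use $x_i^*(f)>1-\eta$ to convert this into $\sum_i\lambda_i(u_i^*-v_i^*)(\Phi f)>2-\delta$ where $\Phi f\in S_Y$. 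That gives two points $\sum_i\lambda_i u_i^*,\ \sum_i\lambda_i v_i^*$ of $C$ at distance $>2-\delta$ in $Y^*$, and letting $\delta\to 0$ shows $\mathrm{diam}(C)=2$, whence $Y$ is octahedral.

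The main obstacle I anticipate is the first step: extracting from non-roughness of $X^*$ a \emph{single} norm-one $f\in S_X$ that works simultaneously for all $x_1^*,\dots,x_n^*$. Non-roughness does not immediately hand us norm-attainment on $X^*$ (the $x_i^*$ need not attain their norms, and even if they do, at different points of $S_X$), so the correct statement is the quantitative one: there exists $f\in S_X$ with $\eta(X^*,f)<\delta/4$, hence $S(B_{X^*},f,\beta)$ has diameter $<\delta/2$ for small $\beta$, and then \emph{perturbing} each $x_i^*$ into this small slice changes the relevant evaluations by at most $\delta/2$. Making the bookkeeping between $\beta$, $\varepsilon'$, $\varepsilon''$ and $\delta$ consistent — and checking that this perturbation keeps the $Y^*$-slices $S(B_{Y^*},y_i,\varepsilon)$ nonempty after enlargement — is where the real care is needed; everything else is the now-familiar back-and-forth between slices in $H^*$ and slices in the factor spaces already used twice above.
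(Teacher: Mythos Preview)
Your overall strategy --- lift the $Y^*$-slices to $H^*$-slices, invoke octahedrality of $H$, then push back down to $Y^*$ --- is the same as the paper's. But there are two problems, one notational/directional and one substantive.

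First, you have the roles of $X$ and $X^*$ (and hence $H$ and $H^*$) reversed in several places. Elements $x^*\otimes y$ with $x^*\in X^*$, $y\in Y$ lie in $H$, not in $H^*$; by Lemma~\ref{densidad}, $B_{H^*}=\overline{co}^{w^*}(S_X\otimes S_{Y^*})$, so elementary tensors in $H^*$ look like $x\otimes y^*$ with $x\in S_X$, $y^*\in S_{Y^*}$. Likewise, non-roughness of $X^*$ does \emph{not} give you a point $f\in S_X$ with small $w^*$-slice $S(B_{X^*},f,\beta)$; that would follow from non-roughness of $X$. What non-roughness of $X^*$ gives (via \cite[Proposition 1.11]{DGZ}) is some $x^*\in S_{X^*}$ and $\alpha>0$ with $\mathrm{diam}\,S(B_X,x^*,\alpha)<\varepsilon$: a small slice of $B_X$, not of $B_{X^*}$. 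With this correction your operator $R_i$ should be $x^*\otimes y_i\in H$, and the relevant embedding is $y^*\mapsto x_0\otimes y^*\in H^*$ for a fixed $x_0$ in that small $X$-slice.

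Second, and this is the real gap: the ``pull back'' step is not routine. Once you know that $\sum_i\lambda_i S(B_{H^*},x^*\otimes y_i,\rho)$ has diameter $2$, you obtain $F_i,G_i$ in those $H^*$-slices far apart --- but these $F_i,G_i$ are a priori arbitrary elements of $B_{H^*}$, not of the form $x_0\otimes y^*$. There is no ``converse direction'' in the proofs of Theorems~\ref{teostrong} and \ref{teostrong2} that extracts $u_i^*,v_i^*\in Y^*$ from them; in those proofs the elements were \emph{constructed} as elementary tensors from the start. The paper's argument here is precisely where the small slice of $B_X$ is used: one writes each $F_i$ (via Lemma~\ref{densidad}) as a convex combination $\sum_k\gamma_{(k,i)}\,x_{(k,i)}\otimes y^*_{(k,i)}$, shows that the mass on indices with $x^*(x_{(k,i)})\leq 1-\rho$ is small, and then uses $\mathrm{diam}\,S(B_X,x^*,\alpha)<\varepsilon$ to force the remaining $x_{(k,i)}$ to be $\varepsilon$-close to a fixed $x_0$. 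Only then can one replace $F_i$ by $x_0\otimes\varphi_i$ with $\varphi_i\in S(B_{Y^*},y_i,\delta)$ up to a controlled error, and read off the diameter estimate in $Y^*$. This factorization-through-$x_0$ step is the heart of the proof, and your sketch does not supply it.
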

\begin{proof} We prove that every convex combination of
$w^*$-slices of $B_{Y^*}$ has diameter $2$. We put $y_1,\ldots
,y_n\in S_Y$, $\delta >0$ and $\lambda_1,\ldots ,\lambda_n\in
(0,1)$ with $\sum_{i=1}^{n}\lambda_i=1$, and consider the convex
combination of $w^*$-slices
$$\sum_{i=1}^{n}\lambda_iS(B_{Y^*},y_i,\delta ).$$ Let be
$\varepsilon >0$. Since the norm of $X^*$ is non-rough, from
\cite[Proposition 1.11]{DGZ}, we have that there exist $x^*\in
S_{X^*}$ and $\alpha >0$ such that $diam (S(B_X,x^*,\alpha
))<\varepsilon .$ Put $\rho :=\min \{\delta ,\alpha\}$ and $x_0\in
S_X\cap S(B_X,x^*,\alpha) $. Consider the convex combination of
$w^*$-slices of $B_{H^*}$ given by
$$\sum_{i=1}^{n}\lambda_iS(B_{H^*},x^*\otimes y_i,\rho^2 ).$$ Now,
$H$ has octahedral norm, then for $i\in \{1,\ldots ,n\}$ there
exist $f_i,g_i\in S_{H^*}\cap S(B_{H^*},x^*\otimes y_i,\rho^2 )$
such that

$$\left\Vert \sum_{i=1}^{n}\lambda_if_i-\sum_{i=1}^{n}
\lambda_ig_i\right\Vert
>2-\varepsilon .$$

By lemma \ref{densidad} we can assume that
$f_i=\sum_{k=1}^{m_i}\gamma_{(k,i)} x_{(k,i)}\otimes y_{(k,i)}^*$
and $g_i=\sum_{k=1}^{m_i}\gamma_{(k,i)}' u_{(k,i)}\otimes
v_{(k,i)}^*$, where $x_{(k,i)},u_{(k,i)}\in S_X$,
$y_{(k,i)}^*,v_{(k,i)}^*\in S_{Y^*}$, and
$\sum_{k=1}^{m_i}\gamma_{(k,i)}
=1=\sum_{k=1}^{m_i}\gamma_{(k,i)}'$ which $\gamma_{(k,i)},
\gamma_{(k,i)}'\in [0,1]$ for all $(k,i), k\in \{1,\ldots ,m_i\}$
and $i\in \{1,\ldots ,n\}$. For $i\in \{1,\ldots ,n\}$, we
consider the sets $P_i:=\{(k,i)\in \{1,\ldots ,m_i\}\times\{i\}:
(x^*\otimes y_i)(x_{(k,i)}\otimes y_{(k,i)}^*)>1-\rho\}$ and
$Q_i:=\{(k,i)\in \{1,\ldots ,m_i\}\times\{i\}: (x^*\otimes
y_i)(u_{(k,i)}\otimes v_{(k,i)}^*)>1-\rho\}$. Then we have that

$$1-\rho^2<(x^*\otimes y_i)\left(\sum_{k=1}^{m_i}\gamma_{(k,i)} x_{(k,i)}\otimes y_{(k,i)}^*\right)=$$

$$ \sum_{i\in P_i}\gamma_{(k,i)} (x^*\otimes y_i)(x_{(k,i)}\otimes y_{(k,i)}^*)+\sum_{i\notin P_i}\gamma_{(k,i)}
(x^*\otimes y_i)(x_{(k,i)}\otimes y_{(k,i)}^*)\leq$$

$$\sum_{i\in P_i}\gamma_{(k,i)}+(1-\rho )\sum_{i\notin P_i}\gamma_{(k,i)}=1-\sum_{i\notin P_i}
\gamma_{(k,i)}+(1-\rho )\sum_{i\notin P_i}\gamma_{(k,i)}.$$

 We
conclude that $\sum_{i\notin P_i}\gamma_{(k,i)}<\rho$, and hence
we have that

$$(x^*\otimes y_i)\left(\sum_{i\in P_i}\gamma_{(k,i)}
x_{(k,i)}\otimes y_{(k,i)}^*\right)>1-\rho .$$

It follows that
$$y_i\left(\sum_{i\in P_i}\gamma_{(k,i)}
y_{(k,i)}^*\right)\geq (x^*\otimes y_i)\left(\sum_{i\in
P_i}\gamma_{(k,i)} x_{(k,i)}\otimes y_{(k,i)}^*\right)>1-\rho ,$$

 and $\sum_{i\in
P_i}\gamma_{(k,i)} y_{(k,i)}^*\in B_{Y^*}$, so
$\varphi_i:=\sum_{i\in P_i}\gamma_{(k,i)} y_{(k,i)}^*\in
S(B_{Y^*},y_i,\delta )$.

For $(k,i)\in P_i$, we have that $(x^*\otimes
y_i)(x_{(k,i)}\otimes y_{(k,i)}^*)>1-\rho$. This implies that
$x^*(x_{(k,i)})>1-\rho$, and as a consequence $\Vert
x_{(k,i)}-x_0\Vert <\varepsilon$. In a similar way, we have that

$$y_i\left(\sum_{i\in Q_i}\gamma_{(k,i)}'
y_{(k,i)}^*\right)\geq (u^*\otimes v_i)\left(\sum_{i\in
Q_i}\gamma_{(k,i)}' u_{(k,i)}\otimes v_{(k,i)}^*\right)>1-\rho ,$$
and $\sum_{i\in Q_i}\gamma_{(k,i)}' v_{(k,i)}^*\in B_{Y^*}$, so
$\psi_i:=\sum_{i\in Q_i}\gamma_{(k,i)} v_{(k,i)}^*\in
S(B_{Y^*},y_i,\delta )$.

For $(k,i)\in Q_i$, we have that $(x^*\otimes
y_i)(u_{(k,i)}\otimes v_{(k,i)}^*)>1-\rho$. This implies that
$x^*(u_{(k,i)})>1-\rho$, and as a consequence $\Vert
u_{(k,i)}-x_0\Vert <\varepsilon$. It follows that

$$\Vert f_i-x_0\otimes\varphi_i\Vert \leq \left\Vert f_i-\sum_{i\in P_i}\gamma_{(k,i)}
x_{(k,i)}\otimes y_{(k,i)}^* \right\Vert +\left\Vert \sum_{i\in
P_i}\gamma_{(k,i)} x_{(k,i)}\otimes
y_{(k,i)}^*-x_0\otimes\varphi_i\right\Vert =$$

$$\left\Vert \sum_{i\notin P_i}\gamma_{(k,i)}
x_{(k,i)}\otimes y_{(k,i)}^* \right\Vert +\left\Vert \sum_{i\in
P_i}\gamma_{(k,i)} (x_{(k,i)}-x_0)\otimes y_{(k,i)}^*\right\Vert
\leq
$$

$$ \sum_{i\notin P_i}\gamma_{(k,i)}
+ \sum_{i\in P_i}\gamma_{(k,i)} \Vert x_{(k,i)}-x_0\Vert \Vert
y_{(k,i)}^*\Vert \leq \rho +\varepsilon .
$$ In a similar way, we have that
$$\Vert g_i-x_0\otimes \psi_i\Vert \leq \rho +\varepsilon .
$$

As a consequence we have that

$$2-\varepsilon < \left\Vert
\sum_{i=1}^{n}\lambda_if_i-\sum_{i=1}^{n}\lambda_ig_i\right\Vert
\leq
$$
$$ \left\Vert
\sum_{i=1}^{n}\lambda_i(f_i-x_0\otimes \varphi_i)\right\Vert
+\left\Vert \sum_{i=1}^{n}\lambda_ix_0\otimes (\varphi_i-
\psi_i)\right\Vert +\left\Vert
\sum_{i=1}^{n}\lambda_i(g_i-x_0\otimes \psi_i)\right\Vert \leq$$

$$2(\rho +\varepsilon )+ \left\Vert
\sum_{i=1}^{n}\lambda_ix_0\otimes (\varphi_i- \psi_i)\right\Vert
=2(\rho +\varepsilon )+ \left\Vert x_0\otimes
\sum_{i=1}^{n}\lambda_i(\varphi_i- \psi_i)\right\Vert \leq$$

$$ 2(\rho
+\varepsilon )+ \Vert x_0\Vert \left\Vert
\sum_{i=1}^{n}\lambda_i(\varphi_i- \psi_i)\right\Vert \leq 2(\rho
+\varepsilon )+ \left\Vert \sum_{i=1}^{n}\lambda_i(\varphi_i-
\psi_i)\right\Vert .$$

It follows that

$$\left\Vert \sum_{i=1}^{n}\lambda_i\varphi_i-
 \sum_{i=1}^{n}\lambda_i\psi_i\right\Vert > 2-2\rho -3\varepsilon .$$
 We recall that $\varphi_i,\psi_i\in S(B_{Y^*},y_i,\delta )$, and
 hence
 $$diam (\sum_{i=1}^{n}\lambda_iS(B_{Y^*},y_i,\delta ))\geq  2-2\rho -3\varepsilon\geq 2-2\delta-3\varepsilon
 .$$

Since $\varepsilon$ is arbitrary, we conclude that
 $$diam \left(\sum_{i=1}^{n}\lambda_iS(B_{Y^*},y_i,\delta )\right)\geq 2-2\delta.$$

Hence, for $0<\eta<\delta$ we have

$$\sum_{i=1}^{n}\lambda_iS(B_{Y^*},y_i,\eta )\subseteq \sum_{i=1}^{n}\lambda_iS(B_{Y^*},y_i,\delta ),$$

and $diam\left( \sum_{i=1}^{n}\lambda_iS(B_{Y^*},y_i,\eta
)\right)\geq 2-2\eta$ by using a similar argument. Hence

$$diam \left(\sum_{i=1}^{n}\lambda_iS(B_{Y^*},y_i,\delta )\right)\geq 2-2\eta.$$

Since $\eta\in (0,\delta)$ is arbitrary we deduce that

$$diam \left(\sum_{i=1}^{n}\lambda_iS(B_{Y^*},y_i,\delta )\right)=2,$$

and we are done. \end{proof}

From the symmetry of the spaces $X$ and $Y$ in the proof of the
above result, this one can be written also in the following way.

\begin{corollary}\label{simetri} Let $X,Y$ be Banach spaces and let be $H$ a closed subspace of $L(X,Y)$ with octahedral norm
such that $X^*\otimes Y\subseteq H$. Assume that $Y$ has non-rough
norm. Then $X^*$ has octahedral norm. \end{corollary}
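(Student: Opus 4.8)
The goal is to prove that every convex combination of slices of $B_X$ has diameter $2$; by the dual form of \cite[Theorem 2.1]{BeLoRu} this is exactly the statement that $X^*$ has octahedral norm. The plan is to run the ``transpose'' of the proof of Proposition \ref{necesaria}. There, the non-roughness of $X^*$ was used (via \cite[Proposition 1.11]{DGZ}) to produce, for a prescribed $\varepsilon>0$, a slice of $B_X$ of diameter $<\varepsilon$ determined by some $x^*\in S_{X^*}$, and one then worked with the rank-one operators $x^*\otimes y_i\in X^*\otimes Y\subseteq H$, the left factor being the fixed $x^*$ and the right factor running over the data. Here I would symmetrically use the non-roughness of $Y$ and \cite[Proposition 1.11]{DGZ} to find $y_0\in S_Y$ and $\alpha>0$ with $diam(S(B_{Y^*},y_0,\alpha))<\varepsilon$, fix $y_0^*\in S_{Y^*}\cap S(B_{Y^*},y_0,\alpha)$, and work with the rank-one operators $x_i^*\otimes y_0\in X^*\otimes Y\subseteq H$, now with $y_0$ fixed in the right factor and the given functionals $x_i^*$ running over the left factor.

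Concretely, fix $x_1^*,\dots,x_n^*\in S_{X^*}$, $\delta>0$ and $\lambda_1,\dots,\lambda_n\in(0,1)$ with $\sum_i\lambda_i=1$, put $\rho:=\min\{\delta,\alpha\}$, and apply the octahedrality of $H$ (through \cite[Theorem 2.1]{BeLoRu}) to the convex combination of $w^*$-slices $\sum_{i=1}^n\lambda_i S(B_{H^*},x_i^*\otimes y_0,\rho^2)$ of $B_{H^*}$, obtaining $f_i,g_i\in S_{H^*}$ in the respective slices with $\Vert\sum_i\lambda_i f_i-\sum_i\lambda_i g_i\Vert>2-\varepsilon$. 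From here I would copy the bookkeeping of Proposition \ref{necesaria} with the two tensor slots interchanged: by Lemma \ref{densidad} write $f_i=\sum_k\gamma_{(k,i)}x_{(k,i)}\otimes y_{(k,i)}^*$ and $g_i=\sum_k\gamma_{(k,i)}'u_{(k,i)}\otimes v_{(k,i)}^*$ with unit vectors and convex coefficients; let $P_i$ (resp. $Q_i$) collect the indices for which $(x_i^*\otimes y_0)(x_{(k,i)}\otimes y_{(k,i)}^*)>1-\rho$ (resp. the analogue for $g_i$); the same estimates as there give $\sum_{(k,i)\notin P_i}\gamma_{(k,i)}<\rho$. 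An index in $P_i$ forces both $x_i^*(x_{(k,i)})>1-\rho$ and $y_{(k,i)}^*(y_0)>1-\rho$, so the $X$-barycentres $\varphi_i:=\sum_{(k,i)\in P_i}\gamma_{(k,i)}x_{(k,i)}$ and $\psi_i:=\sum_{(k,i)\in Q_i}\gamma_{(k,i)}'u_{(k,i)}$ lie in $S(B_X,x_i^*,\delta)$ (exactly as in Proposition \ref{necesaria}), while each such $y_{(k,i)}^*$ lies in the small $w^*$-slice $S(B_{Y^*},y_0,\alpha)$, whence $\Vert y_{(k,i)}^*-y_0^*\Vert<\varepsilon$; consequently $\Vert f_i-\varphi_i\otimes y_0^*\Vert\le\rho+\varepsilon$ and $\Vert g_i-\psi_i\otimes y_0^*\Vert\le\rho+\varepsilon$. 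Since $\Vert(\sum_i\lambda_i(\varphi_i-\psi_i))\otimes y_0^*\Vert_{H^*}\le\Vert\sum_i\lambda_i(\varphi_i-\psi_i)\Vert$, the triangle inequality yields $\Vert\sum_i\lambda_i\varphi_i-\sum_i\lambda_i\psi_i\Vert>2-2\rho-3\varepsilon\ge 2-2\delta-3\varepsilon$, hence $diam(\sum_i\lambda_i S(B_X,x_i^*,\delta))\ge 2-2\delta-3\varepsilon$. Letting $\varepsilon\to0$ gives $\ge 2-2\delta$, and then repeating the whole argument with any $\eta\in(0,\delta)$ in place of $\delta$, together with the inclusion $\sum_i\lambda_i S(B_X,x_i^*,\eta)\subseteq\sum_i\lambda_i S(B_X,x_i^*,\delta)$, upgrades the diameter to $2$, exactly as at the end of Proposition \ref{necesaria}.

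The step that genuinely needs care is not computational but structural: $L(X,Y)$ is not symmetric in $X$ and $Y$. Lemma \ref{densidad} attaches $X$ to the left slot and $Y^*$ to the right slot of the elementary functionals $x\otimes y^*$, whereas the inclusion $X^*\otimes Y\subseteq H$ attaches $X^*$ to the left and $Y$ to the right of the rank-one operators that define the auxiliary $w^*$-slices. One must therefore place the factor coming from the non-roughness of $Y$ in the right slot, i.e. use $x_i^*\otimes y_0$ rather than (as in Proposition \ref{necesaria}) a fixed factor in the left slot, so that Lemma \ref{densidad} and the bound $\Vert x\otimes y_0^*\Vert_{H^*}\le\Vert x\Vert$ continue to apply and the barycentres one extracts land in slices of $B_X$ (not of $B_{Y^*}$). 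Once this relabelling is made, the proof is a line-by-line transcription of the proof of Proposition \ref{necesaria}.
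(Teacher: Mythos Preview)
Your proposal is correct and follows precisely the approach the paper intends: the paper does not give a separate proof of Corollary~\ref{simetri} but merely appeals to the symmetry of $X$ and $Y$ in the proof of Proposition~\ref{necesaria}, and you have carried out that symmetric argument in full, correctly placing the fixed factor $y_0$ (coming from the non-roughness of $Y$) in the right tensor slot so that Lemma~\ref{densidad} and the rank-one operators $x_i^*\otimes y_0\in X^*\otimes Y\subseteq H$ fit together. The structural caveat you flag about the asymmetry of $L(X,Y)$ is exactly the point one must check, and your resolution is the right one.
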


As a consequence of the above proposition and the Theorem
\ref{teostrong} one gets the following equivalence.

\begin{corollary}\label{corolario-teostrong}
Let $X,Y$ be Banach spaces. Assume that the norm of $X^*$ is
non-rough and that there exists $f\in S_{X^{*}}$ such that
$n(X^{*},f)=1$. Then for every closed subspace $H$ of $L(X,Y)$
such that $X^*\otimes Y\subseteq H$, the following assertion are
equivalent:
\begin{enumerate}
\item[i)] $H$ has octahedral norm. \item[ii)] $Y$ has octahedral
norm.
\end{enumerate}
\end{corollary}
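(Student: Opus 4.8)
The plan is to note that the corollary is an immediate juxtaposition of two results already in hand, one supplying each implication, so essentially no new argument is required. I would first record that the standing hypotheses split naturally: the condition $n(X^*,f)=1$ is what Theorem \ref{teostrong} needs, while the non-roughness of the norm of $X^*$ is what Proposition \ref{necesaria} needs.

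For the implication $(ii)\Rightarrow(i)$, I would invoke Theorem \ref{teostrong} directly: since $Y$ has octahedral norm and there exists $f\in S_{X^*}$ with $n(X^*,f)=1$, the theorem gives that every closed subspace $H$ of $L(X,Y)$ with $X^*\otimes Y\subseteq H$ has octahedral norm. The non-roughness hypothesis is not used in this direction. For the converse $(i)\Rightarrow(ii)$, I would apply Proposition \ref{necesaria}: assuming $H$ has octahedral norm and using that the norm of $X^*$ is non-rough, the proposition yields at once that $Y$ has octahedral norm; here the hypothesis $n(X^*,f)=1$ is not used.

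There is no genuine obstacle to overcome: the only thing to verify is that the two hypotheses of the corollary are exactly what Theorem \ref{teostrong} and Proposition \ref{necesaria} respectively demand, which is the case. Combining the two implications gives the stated equivalence, and the proof is complete.
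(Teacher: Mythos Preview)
Your proposal is correct and mirrors exactly what the paper does: it states the corollary as an immediate consequence of Theorem \ref{teostrong} (for $(ii)\Rightarrow(i)$) and Proposition \ref{necesaria} (for $(i)\Rightarrow(ii)$), with no additional argument. Your observation about which hypothesis is used in which direction is accurate and matches the paper's implicit reasoning.
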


Taking $X=\ell_1$ in the above corollary one has

\begin{corollary} Let $Y$ be Banach space. Then $L(\ell_1 ,Y)$ has
octahedral norm if and only if $Y$ has octahedral norm.
\end{corollary}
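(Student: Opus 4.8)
The plan is to obtain this as a direct application of Corollary~\ref{corolario-teostrong} with $X=\ell_1$ and with $H=L(\ell_1,Y)$ itself, so the whole task reduces to checking the two hypotheses of that corollary for $X^*=\ell_1^*=\ell_\infty$: that the norm of $\ell_\infty$ is non-rough, and that there is some $f\in S_{\ell_\infty}$ with $n(\ell_\infty,f)=1$. Note first that $L(\ell_1,Y)$ is trivially a closed subspace of itself containing the finite-rank operators $X^*\otimes Y=\ell_\infty\otimes Y$, so once the two hypotheses are verified the corollary yields at once that $L(\ell_1,Y)$ has octahedral norm if and only if $Y$ has octahedral norm.

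For non-roughness I would argue directly that the norm of $\ell_\infty$ is Fr\'echet differentiable at the first unit vector $e_1=(1,0,0,\dots)$: for $\Vert h\Vert_\infty<\tfrac12$ the first coordinate dominates, so $\Vert e_1+h\Vert_\infty=1+h_1$, whence $\Vert e_1+h\Vert_\infty+\Vert e_1-h\Vert_\infty-2=0$ and therefore $\eta(\ell_\infty,e_1)=0$. In particular $\ell_\infty$ is not $\varepsilon$-rough for any $\varepsilon>0$, i.e. its norm is non-rough. (Alternatively one may invoke \cite[Proposition~1.11]{DGZ} together with the fact that the $w^*$-slices $S(B_{\ell_\infty^*},e_n,\alpha)$ shrink to the evaluation functional $\delta_n$ as $\alpha\to 0$.)

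For the second hypothesis the correct choice is the constant sequence $f=\mathbf 1=(1,1,1,\dots)\in S_{\ell_\infty}$. Since $\Vert\varphi\Vert\le 1$ and $\varphi(\mathbf 1)=1$ force $\varphi$ to be a positive functional of norm one, $D(\ell_\infty,\mathbf 1)$ is precisely the set of states of $\ell_\infty$, and this set contains every point evaluation $\delta_k$. Hence, given $x\in\ell_\infty$ and $\varepsilon>0$, choosing $k$ with $\vert x_k\vert>\Vert x\Vert_\infty-\varepsilon$ gives $\vert\delta_k(x)\vert>\Vert x\Vert_\infty-\varepsilon$; letting $\varepsilon\to 0$ we get $v(x)=\sup\{\vert\varphi(x)\vert:\varphi\in D(\ell_\infty,\mathbf 1)\}=\Vert x\Vert_\infty$ for every $x$, that is, $n(\ell_\infty,\mathbf 1)=1$.

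The only point that needs a moment's thought — the ``main obstacle,'' modest as it is — is that one must use two different norming elements for the two hypotheses: $e_1$ settles non-roughness but fails the second requirement (one checks that $D(\ell_\infty,e_1)=\{\delta_1\}$, so in fact $n(\ell_\infty,e_1)=0$), while $\mathbf 1$ gives $n(\ell_\infty,\mathbf 1)=1$ but is far from a point of differentiability. With $e_1$ and $\mathbf 1$ playing these two roles, Corollary~\ref{corolario-teostrong} applies to $H=L(\ell_1,Y)$ and the statement follows.
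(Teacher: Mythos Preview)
Your proof is correct and follows exactly the route the paper intends: the paper simply states ``Taking $X=\ell_1$ in the above corollary one has\ldots'' without verifying the hypotheses, and you have supplied those verifications. The two checks you give---Fr\'echet differentiability of the $\ell_\infty$-norm at $e_1$ (hence non-roughness) and $n(\ell_\infty,\mathbf 1)=1$ via the coordinate evaluations $\delta_k\in D(\ell_\infty,\mathbf 1)$---are correct and are precisely what is needed.
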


Again, using the duality between having octahedral norm and the
SD2P joint to the duality $(X\widehat{\otimes}_\pi Y)^*
=L(X,Y^*)$, we get from Proposition \ref{necesaria}, a necessary
condition in order to the projective tensor product of Banach
spaces has the SD2P.

\begin{corollary} Assume that $X$ and $Y$ are Banach spaces such
that $X\widehat{\otimes}_\pi Y$ has SD2P and  $X^*$ has non-rough
norm. Then $Y$ has SD2P.
\end{corollary}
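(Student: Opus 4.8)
The plan is to reduce the statement to Proposition \ref{necesaria} via the duality identifications already recorded in the introduction. First I would recall that, by \cite{BeLoRu}, a Banach space $Z$ has the SD2P (every convex combination of slices of $B_Z$ has diameter $2$) precisely when $Z^*$ has octahedral norm; dually, $Z$ has octahedral norm exactly when every convex combination of $w^*$-slices of $B_{Z^*}$ has diameter $2$. I would also recall the isometric identification $(X\widehat{\otimes}_\pi Y)^*=L(X,Y^*)$, coming from the fact that $\mathcal B(X\times Y)$ is isometrically the dual of $X\widehat{\otimes}_\pi Y$ together with the canonical identification of bounded bilinear forms on $X\times Y$ with operators in $L(X,Y^*)$.

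Next, assuming $X\widehat{\otimes}_\pi Y$ has the SD2P, the first recalled fact gives that $(X\widehat{\otimes}_\pi Y)^*$ has octahedral norm, that is, $L(X,Y^*)$ has octahedral norm. Now I would apply Proposition \ref{necesaria} to the pair of spaces $(X,Y^*)$ with $H:=L(X,Y^*)$, which is a closed subspace of itself containing $X^*\otimes Y^*$ (the finite-rank operators). The remaining hypothesis of that proposition is precisely that $X^*$ has non-rough norm, which is assumed here, so the proposition yields that $Y^*$ has octahedral norm.

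Finally, by the dual characterization above applied to $Z:=Y$, the fact that $Y^*$ has octahedral norm is equivalent to $Y$ having the SD2P, and the proof is complete. The only point requiring care is the bookkeeping of the duality identifications: making sure that the space whose dual must be non-rough in Proposition \ref{necesaria} is indeed $X$, so that the assumption ``$X^*$ non-rough'' is exactly the one invoked, and that $H=L(X,Y^*)$ legitimately plays the role of the closed subspace $H\subseteq L(X,Y^*)$ with $X^*\otimes Y^*\subseteq H$. There is no genuine analytic difficulty beyond Proposition \ref{necesaria} itself.
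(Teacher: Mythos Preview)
Your proof is correct and follows exactly the approach indicated in the paper: use the SD2P/octahedrality duality to pass from SD2P of $X\widehat{\otimes}_\pi Y$ to octahedrality of $(X\widehat{\otimes}_\pi Y)^*=L(X,Y^*)$, apply Proposition~\ref{necesaria} with $H=L(X,Y^*)$ and the hypothesis that $X^*$ is non-rough to obtain that $Y^*$ is octahedral, and then conclude that $Y$ has the SD2P. The bookkeeping you flag is handled correctly.
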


Let $X$ be a Banach space. According to \cite{HLP} we say that
(the norm on) $X$ is {\it{weakly octahedral}} if, for every
finite-dimensional subspace $Y$ of $X$, every $x^*\in B_{X^*}$,
and every $\varepsilon\in\mathbb R^+$ there exists $y\in S_X$
satisfying

$$\Vert x+y\Vert\geq (1-\varepsilon)(\vert x^*(x)\vert+\Vert y\Vert)\ \forall x\in Y.$$

It is clear that octahedral norm implies weakly octahedral norm,
but the converse is not true. In fact, by \cite[Theorem
2.1]{BeLoRu} and \cite[Theorem 3.4]{HLP}, $(c_0\oplus_p c_0)^*$
has weakly octahedral norm but does not have octahedral norm
\cite[Theorem 3.2]{ABL} for every $1<p<\infty$. From \cite{HLP},
we know that a Banach space $X$ has a weakly  octahedral norm if,
and only if, every nonempty relatively weak-star open subset of
the unit ball in $X^*$ has diameter $2$.

Now our aim is to enlarge the family of examples of spaces
enjoying weakly octahedral norm but failing to have octahedral
norm.

\begin{proposition}\

Let $p\geq 1$ and $X$ a non-null Banach space. Then
$$Y:=L(c_0\oplus_p c_0,X),$$
has weakly octahedral norm.
\end{proposition}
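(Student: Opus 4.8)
The plan is to use the characterisation of weak octahedrality recorded above (from \cite{HLP}): the norm of $Y:=L(c_0\oplus_p c_0,X)$ is weakly octahedral if and only if every nonempty relatively weak$^*$-open subset of $B_{Y^*}$ has diameter $2$. So I would fix a nonempty relatively weak$^*$-open set $W\subseteq B_{Y^*}$ and $\varepsilon>0$, and aim to produce $\Psi_+,\Psi_-\in W$ with $\Vert\Psi_+-\Psi_-\Vert>2-\varepsilon$. Since $(c_0\oplus_p c_0)^*\otimes X\subseteq Y$, Lemma \ref{densidad} applies (with the roles of $X,Y,H$ there played by $c_0\oplus_p c_0$, $X$ and $Y$), giving $B_{Y^*}=\overline{co}^{w^*}(S_{c_0\oplus_p c_0}\otimes S_{X^*})$. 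As $W$ is nonempty and relatively weak$^*$-open, it meets $co(S_{c_0\oplus_p c_0}\otimes S_{X^*})$, so it contains a finite convex combination $\Phi_0=\sum_{j=1}^{m}\mu_j\,e_j\otimes x_j^*$ with $e_j\in S_{c_0\oplus_p c_0}$, $x_j^*\in S_{X^*}$, $\mu_j>0$, $\sum_j\mu_j=1$, and there are $T_1,\dots,T_k\in S_Y$ and $\delta>0$ with
\[
\{\Psi\in B_{Y^*}:\ |\Psi(T_l)-\Phi_0(T_l)|<\delta,\ l=1,\dots,k\}\subseteq W .
\]

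Next I would normalise the data to exploit the structure of $c_0\oplus_p c_0$. Write $e_j=(u_j,v_j)$ with $u_j,v_j\in c_0$, $\Vert u_j\Vert_\infty^p+\Vert v_j\Vert_\infty^p=1$. Since $u_j,v_j\in c_0$, after truncating to a finite set of coordinates and renormalising (which perturbs $\Phi_0$ on $T_1,\dots,T_k$ by as little as we wish) we may assume $u_j,v_j$ are supported in $\{1,\dots,N\}$. The key extra observation is that the finitely many functionals $T_l^*x_j^*$ lie in $(c_0\oplus_p c_0)^*=\ell_1\oplus_{p'}\ell_1$, hence their coordinates tend to $0$; so we may fix, for each $j$, coordinates $a_j,b_j>N$ (in the first and second copy of $c_0$ respectively, all distinct) on which every $T_l^*x_j^*$ is as small as we wish. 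For each $j$ set $w_j:=(\Vert u_j\Vert_\infty\,g^1_{a_j},\ \Vert v_j\Vert_\infty\,g^2_{b_j})$, where $g^1_{a_j},g^2_{b_j}$ are the corresponding unit vectors of the two copies of $c_0$; then $\Vert w_j\Vert_{c_0\oplus_p c_0}=(\Vert u_j\Vert_\infty^p+\Vert v_j\Vert_\infty^p)^{1/p}=1$.

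Now I would carry out the ``doubling'' of each rank-one piece. Put $e_j^{\pm}:=e_j\pm w_j$. Because $a_j,b_j$ lie beyond the support of $e_j$, each component of $e_j^{\pm}$ has the same sup-norm as the corresponding component of $e_j$, so $e_j^{\pm}\in S_{c_0\oplus_p c_0}$; and $e_j^{+}-e_j^{-}=2w_j$. Let $\Psi_{\pm}:=\sum_{j}\mu_j\,e_j^{\pm}\otimes x_j^*\in B_{Y^*}$. Then $\Psi_{\pm}(T_l)-\Phi_0(T_l)=\pm\sum_j\mu_j\big(\Vert u_j\Vert_\infty (T_l^*x_j^*)(g^1_{a_j},0)+\Vert v_j\Vert_\infty (T_l^*x_j^*)(0,g^2_{b_j})\big)$, a sum of finitely many terms each made small by the choice of $a_j,b_j$; hence $\Psi_{\pm}\in W$. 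It remains to bound $\Vert\Psi_+-\Psi_-\Vert=2\big\Vert\sum_j\mu_j\,w_j\otimes x_j^*\big\Vert_{Y^*}$ from below by $2-\varepsilon$, i.e.\ to exhibit $T\in B_Y$ with $\sum_j\mu_j\,x_j^*(Tw_j)$ close to $1$. Here one uses that each $w_j$ lives in a two-dimensional ``tail block'' $V_j:=\operatorname{span}\{(g^1_{a_j},0),(0,g^2_{b_j})\}$, isometric to $\ell_p^2$, that these blocks are supported on pairwise disjoint coordinates, and that on $V_j$ the dual assignment $g^1_{a_j}\mapsto\Vert u_j\Vert_\infty^{p-1}\chi_j$, $g^2_{b_j}\mapsto\Vert v_j\Vert_\infty^{p-1}\chi_j$ (with $\chi_j\in S_X$ a near-norming vector of $x_j^*$) sends $w_j$ to $\chi_j$ and, by $\ell_p$–$\ell_{p'}$ duality, has norm $1$. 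Gluing such block maps (and setting $T=0$ elsewhere), using that disjointly supported unit vectors of $c_0$ have the same sup-norm as a single one, one assembles $T\in S_Y$ with $\sum_j\mu_j\,x_j^*(Tw_j)$ close to $1$; hence $diam(W)=2$, and by the cited characterisation $Y$ has weakly octahedral norm.

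The hard part is precisely this last step. When several $x_j^*$ coincide (or, more generally, the $x_j^*$ lie in a one-dimensional span) the functional $\Phi_0$ collapses to a rank-one functional $w'\otimes x^*$, and that degenerate case must be treated apart — there one doubles $w'$ itself in a fresh $\ell_p^2$ tail block instead of doubling term by term. In the remaining cases one has to choose the tail indices $a_j,b_j$ and the norming vectors $\chi_j$ so that the glued operator $T$ still has norm $1$ on all of $c_0\oplus_p c_0$ (this is exactly where the $c_0$-geometry of the two factors is needed) while $\sum_j\mu_j x_j^*(Tw_j)$ stays near $1$; this balancing is where the specific geometry of $c_0\oplus_p c_0$ enters, and it runs parallel to the proof that $(c_0\oplus_p c_0)^*$ is weakly octahedral in \cite{HLP}.
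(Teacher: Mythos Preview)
Your overall strategy---use the weak$^*$ diameter-two characterisation, pick a convex combination of rank-one tensors in $W$ via Lemma \ref{densidad}, and ``double'' each $e_j$ by adding and subtracting a tail perturbation---matches the paper's. The construction of $\Psi_\pm\in W$ is fine. The gap is exactly where you flag it: the lower bound on $\Vert\Psi_+-\Psi_-\Vert$.

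Your perturbation $w_j=(\Vert u_j\Vert_\infty\,g^1_{a_j},\Vert v_j\Vert_\infty\,g^2_{b_j})$ keeps only the $\ell_p^2$ profile of $e_j$ and discards everything else. In general the element $\sum_j\mu_j\,w_j\otimes x_j^*$ need \emph{not} have norm close to $1$. Already for $X=\mathbb R$ (so $Y=(c_0\oplus_p c_0)^*$ and $Y^*=\ell_\infty\oplus_p\ell_\infty$), with $m$ equal weights $\mu_j=1/m$ and $\Vert u_j\Vert_\infty=\Vert v_j\Vert_\infty=2^{-1/p}$, one computes $\bigl\Vert\sum_j\mu_j w_j\bigr\Vert=1/m$, so your argument yields only $\operatorname{diam}(W)\ge 2/m$. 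The operator you propose to glue from the block maps $V_j\to X$ likewise fails to be a contraction: if all the $\chi_j$ coincide (or are close), testing on the vector with all relevant coordinates equal to $1$ shows its norm is of order $m$. Treating the ``degenerate'' case separately does not rescue this, because the problem is not degeneracy but the loss of the original structure of the $e_j$'s.

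What the paper does instead is the missing idea: it first arranges that the chosen convex combination $\Phi_0$ has norm $>1-\varepsilon^2$, picks a single operator $T\in S_Y$ with $\sum_i\lambda_i x_i^*\bigl(T(f_i,g_i)\bigr)>1-\varepsilon$, and then doubles each $(f_i,g_i)$ by an \emph{exact shifted copy} of itself (not just its $\ell_p^2$ shadow) into a fresh coordinate block $\{nk+1,\dots,(n+1)k\}$. The difference $u-v$ is then $2$ times a block-shifted copy of $\Phi_0$, and the witnessing operator is $S:=T\circ\Phi\circ P$, where $P$ projects onto the new block and $\Phi$ is the shift isometry back to the original block. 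This gives $(u-v)(S)=2\sum_i\lambda_i x_i^*(T(f_i,g_i))>2(1-\varepsilon)$ for free---no gluing, no balancing, no case distinction. The step you were missing is precisely this: use the norming operator $T$ \emph{first} and then transport it via the shift structure of $c_0$, rather than trying to manufacture $T$ afterwards from the tail data.
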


\begin{proof} By \cite[Theorem 3.4]{HLP} it is equivalent to show that $Y^*$ has
the weak$^*$ diameter 2 property, that is, every nonempty
relatively weak-star open subset of the unit ball in $Y^*$ has
diameter $2$.

So let be $\varepsilon\in\mathbb R^+$ and $W$ be a non-empty
relatively $w^*-$open subset of $B_Y$. As $X\neq \{0\}$ and
$c_0\oplus_p c_0$ is infinite-dimensional, $Y$ is also
infinite-dimensional. Then

$$W\cap S_Y\neq \emptyset.$$

By lemma 2.6 there are $m\in\mathbb N, (f_1,g_1),\ldots,
(f_m,g_m)\in S_{c_0\oplus_p c_0}, x_1^*,\ldots,x_n^*\in S_{X^*},
\lambda_1,\ldots, \lambda_n\in [0,1], \sum_{i=1}^m \lambda_i=1$
such that

$$\sum_{i=1}^m \lambda_i (f_i,g_i)\otimes x_i^*\in W,$$

and $\left \Vert \sum_{i=1}^m \lambda_i (f_i,g_i)\otimes
x_i^*\right\Vert>1-\varepsilon^2$.

As $f_1,\ldots, f_m, g_1,\ldots, g_m\in c_0$ we can assume that
$f_i,g_i$ have finite support for every $i\in\{1,\ldots, n\}$
because $W$ is a norm open set. Hence there exists $k\in\mathbb N$
such that for every   $n\geq k$ one has

$$f_i(n)=g_i(n)=0\ \forall i\in\{1,\ldots,m\}.$$

In other words

\begin{equation}\label{sopofinito}(f_i,g_i)=\sum_{j=1}^k f_i(j) (e_j,0)+\sum_{j=1}^k g_i(j) (0,e_j)\ \ \forall i\in\{1,\ldots, m\}.
\end{equation}

As $\left \Vert \sum_{i=1}^m \lambda_i (f_i,g_i)\otimes
x_i^*\right\Vert>1-\varepsilon^2$ we can find $T\in
S_{L(c_0\oplus_p c_0,X)}$ such that

\begin{equation}\label{operateohanban}\sum_{i=1}^m \lambda_i x_i^*(T(f_i,g_i))>1-\varepsilon.
\end{equation}

Fix $i\in\{1,\ldots,n\}$. Define

$$u_{i,n}:=(f_i,g_i)+\sum_{j=1}^k f_i(j)(e_{nk+j},0)+\sum_{j=1}^k g_i(j) (0,e_{nk+j}), $$ $$v_{i,n}:=(f_i,g_i)-\sum_{j=1}^k f_i(j)(e_{nk+j},0)-\sum_{j=1}^k g_i(j) (0,e_{nk+j}).$$

We claim that $\Vert (f_i,g_i)\Vert=\Vert u_{i,n}\Vert=\Vert
v_{i,n}\Vert\ n\geq 2\ \forall i\in\{1,\ldots, n\}$. Let check
only the first equality because the second one is similar. From
(\ref{sopofinito}) we deduce that

$$\Vert u_{i,n}\Vert=\left ( \left(\max\limits_{1\leq j\leq k} \vert f(j)\vert \right)^p+  \left(\max\limits_{1\leq j\leq k} \vert g(j)\vert \right)^p  \right )^\frac{1}{p}=\left ( \Vert f_i\Vert^p+\Vert g_i\Vert^p\right)^\frac{1}{p}=\Vert (f_i,g_i)\Vert.$$

On the other hand, since $\{e_n\}\rightarrow 0$ in the weak
topology of $c_0$, it is clear that

$$\{u_{i,n}\}_{n\in\mathbb N}\rightarrow (f_i,g_i)\ \forall i\in\{1,\ldots,m\},$$

where the last convergence is in the weak topology of $c_0\oplus_p
c_0$.

In a similar way $\{v_{i,n}\}_{n\in\mathbb N}\rightarrow
(f_i,g_i)\ \forall i\in\{1,\ldots,m\}$ in the weak topology of
$c_0\oplus_p c_0$. Hence

$$\{u_{i,n}\otimes x_i^*\}_{n\in\mathbb N}\rightarrow (f_i,g_i)\otimes x_i^*\ \forall i\in\{1,\ldots, n\},$$

in the weak$^*$ topology of $Y^*$.

In order to check the last assertion pick $G\in L(c_0\oplus_p
c_0,X)$. As $G$ is norm-norm continuous then $G$ is weak-weak
continuous. Hence

$$\{G(u_{i,n})\}\rightarrow^w G(f_i,g_i).$$

By definition of weak topology in $X$ then

$$\{u_{i,n}\otimes x_i^*(T)\}=\{x_i^*(T(u_{i,n}))\}\rightarrow x_i^*(T(f_i,g_i))=(f_i,g_i)\otimes x_i^*(T).$$

Last equality proves that $\{u_{i,n}\otimes
x_i^*\}\rightarrow^{w^*} (f_i,g_i)\otimes x_i^*.$

Similarly

$$\{v_{i,n}\otimes x_i\}_{n\in\mathbb N}\rightarrow (f_i,g_i)\otimes x_i\ \forall i\in\{1,\ldots, n\},$$

in the weak$^*$ topology of $Y$. Thus there exist $n\in\mathbb N$
big enough such that

$$u:=\sum_{i=1}^m \lambda_i u_{i,n}\otimes x_i^*,$$

$$v:=\sum_{i=1}^m \lambda_i v_{i,n}\otimes x_i^*,$$

are elements of $W$ (Notice that $\Vert (f_i,g_i)\Vert=\Vert
u_{i,n}\Vert=\Vert v_{i,n}\Vert\ n\geq 2\ \forall i\in\{1,\ldots,
n\}$ implies that $u,v$ are elements in the unit ball of $Y$).
\vspace{0.3cm}

Now our aim is to estimate the norm of the element

$$u-v=2\sum_{i=1}^n \lambda_i \left ( \sum_{j=1}^k f_i(j)(e_{nk+j},0)+\sum_{j=1}^k g_i(j) (0,e_{nk+j})\right ) \otimes x_i^*.$$

Define

$$M:=span \{ (e_{nk+j},0),(0,e_{nk+j}): j\in\{1,\ldots, k\} \},$$

$$ Z:=span \{ (e_{j},0),(0,e_{j}): j\in\{1,\ldots, k\} \}.$$

Then $\Phi:M\longrightarrow Z$ given by

$$\Phi(e_{nk+j},0)=(e_j,0)\ \ \Phi(0,e_{nk+j})=(0,e_j),$$

defines a linear isometry. Moreover it is  clear that

$$\Phi\left ( \sum_{j=1}^k f_i(j)(e_{nk+j},0)+\sum_{j=1}^k g_i(j) (0,e_{nk+j})\right )=(f_i,g_i)\ \forall i\in\{1,\ldots,m\}.$$

Define $P:c_0\oplus_p c_0\longrightarrow c_0\oplus_p c_0$ by the
equation

$$P(f,g)=\sum_{s=nk+1}^{(n+1)k} f(s)(e_s,0)+\sum_{s=nk+1}^{(n+1)k} g(s)(0,e_s).$$

$P$ is a linear projection and clearly $\Vert P\Vert=1$. In
addition, for every $i\in\{1,\ldots, n\}$ we have

$$\Phi\left ( P\left( \sum_{j=1}^k f_i(j)(e_{nk+j},0)+\sum_{j=1}^k g_i(j) (0,e_{nk+j})\right)\right)=$$

$$=\sum_{j=1}^k f_i(j)(e_{j},0)+\sum_{j=1}^k g_i(j) (0,e_{j})=(f_i,g_i).$$

Define

$$S:c_0\oplus_p c_0\longrightarrow X\ \ \ S=T\circ\Phi\circ P.$$

$S\in L(c_0\oplus_p c_0,X)$ and $\Vert S\Vert\leq 1$. If we
compute $(u-v)(S)$ we have

$$(u-v)(S)=2\sum_{i=1}^n \lambda_i x_i^* \left(T\left( \Phi\left( P\left ( \sum_{j=1}^k f_i(j)(e_{nk+j},0)+\sum_{j=1}^k g_i(j) (0,e_{nk+j}) \right)\right)\right )\right)=$$

$$=2\sum_{i=1}^n \lambda_i x_i^*(T(f_i,g_i)).$$

Hence

$$diam(W)\geq \Vert u-v\Vert\geq (u-v)(S)= 2\sum_{i=1}^m \lambda_i x_i^*(T(f_i,g_i))\mathop{>}
\limits^{\mbox{(\ref{operateohanban})}} 2(1-\varepsilon).$$

From the last estimation we deduce that $diam(W)=2$ from the
arbitrariness of $\varepsilon$. \vspace{0.3cm}

By the arbitrariness of $W$ we deduce that $Y$ has weakly
octahedral norm, so we are done.

\end{proof}

Applying  the above result joint to Corollary \ref{simetri} and
\cite[Theorem 3.2]{ABL} we obtain the desired example.

\begin{corollary}\

Let $X$ be a non-null Banach space such that $X$ has non-rough
norm and $p>1$. Then the norm of the space

$$Y=L(c_0\oplus_p c_0,X),$$

is weakly octahedral but not octahedral.

\end{corollary}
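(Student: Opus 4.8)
The plan is to assemble the statement from three ingredients already in hand: the proposition proved just above, Corollary~\ref{simetri}, and the failure of octahedrality for the dual of $c_0\oplus_p c_0$ recorded in \cite[Theorem 3.2]{ABL}. The positive half is immediate: since $X$ is non-null and $p>1$ (in particular $p\geq 1$), the preceding proposition applies verbatim and gives that $Y=L(c_0\oplus_p c_0,X)$ has weakly octahedral norm. Hence essentially all the work goes into showing that this norm is \emph{not} octahedral.

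For that, I would argue by contradiction. Assume $Y=L(c_0\oplus_p c_0,X)$ has octahedral norm. Regard $Y$ as a closed subspace $H$ of $L(c_0\oplus_p c_0,X)$ — indeed take $H:=L(c_0\oplus_p c_0,X)$ itself — so that trivially $(c_0\oplus_p c_0)^*\otimes X\subseteq H$. Now apply Corollary~\ref{simetri} with the pair of Banach spaces $c_0\oplus_p c_0$ and $X$ playing the roles of the $X$ and $Y$ there. Its hypotheses are met: $H$ has octahedral norm by assumption, the tensor inclusion holds, and the target space $X$ has non-rough norm by hypothesis of the present corollary. The conclusion of Corollary~\ref{simetri} is then that $(c_0\oplus_p c_0)^*$ has octahedral norm. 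But \cite[Theorem 3.2]{ABL} asserts precisely that $(c_0\oplus_p c_0)^*$ fails to have octahedral norm for every $1<p<\infty$, a contradiction. Therefore $Y$ cannot have octahedral norm, and combined with the first paragraph this proves the statement.

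There is no genuine obstacle here; the only point requiring care is the correct bookkeeping when invoking Corollary~\ref{simetri}, since in that statement it is $X^*$, not $Y$, that is concluded to be octahedral — which is exactly what we want, with $c_0\oplus_p c_0$ in the slot of $X$ and $X$ in the slot of $Y$. Note also that the restriction $p>1$ (as opposed to the weaker $p\geq 1$ that suffices for the weakly octahedral half) is used solely to invoke \cite[Theorem 3.2]{ABL}.
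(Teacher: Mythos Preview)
Your proof is correct and follows exactly the approach indicated by the paper: the preceding proposition gives the weakly octahedral part, and the contrapositive of Corollary~\ref{simetri} together with \cite[Theorem 3.2]{ABL} (the failure of octahedrality for $(c_0\oplus_p c_0)^*$ when $1<p<\infty$) rules out octahedrality of $Y$. Your bookkeeping for the roles of the spaces in Corollary~\ref{simetri} is accurate.
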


\end{document}